\newtheorem{theorem}{Theorem}[section]
\newtheorem{proposition}[theorem]{Proposition}
\newtheorem{corollary}[theorem]{Corollary}
\newtheorem{lemma}[theorem]{Lemma}
\theoremstyle{definition}
\newtheorem{definition}[theorem]{Definition}
\newtheorem{example}{Example}
\newtheorem{remark}[theorem]{Remark}
\newcommand{\oline}[1]{\mathbin{\overline{#1}}}
\newcommand{\uline}[1]{\mathbin{\underline{#1}}}
\def\otr{\oline{\triangleright}}
\def\utr{\uline{\triangleright}}
\date{}
\title{\Large \textbf{Partially Multiplicative Biquandles and Handlebody-Knots}}
\author{
Atsushi Ishii\footnote{Email: \texttt{aishii@math.tsukuba.ac.jp}. Partially supported by JSPS KAKENHI Grant Number 15K04866}
\and 
Sam Nelson\footnote{Email: \texttt{knots@esotericka.org}. Partially Supported by Simons Foundation Collaboration Grant 316709}
}
\begin{document}
\maketitle

\begin{abstract}  We introduce several algebraic structures related to
handlebody-knots, including \textit{$G$-families of biquandles}, 
\textit{partially multiplicative biquandles} and \textit{group decomposable
biquandles}. These structures can be used to color the semiarcs
in $Y$-oriented spatial trivalent graph diagrams representing
$S^1$-oriented handlebody-knots to obtain computable invariants 
for handlebody-knots and  handlebody-links. In the case of $G$-families of
biquandles, we enhance the counting invariant using the group $G$ to obtain
a polynomial invariant of handlebody-knots.
\end{abstract}

\bigskip

\quad
\parbox{5in}{
\textsc{Keywords:} Handlebody-knots,  biquandles, $G$-families of biquandles,
partially multiplicative biquandles, group decomposable biquandles

\textsc{2000 MSC:} 57M27, 57M25}

\section{Introduction}

Introduced in the early 1980s, \textit{quandles} are algebraic structures 
which can be used to distinguish knots and links by counting colorings of
\textit{arcs} (the portions going from one under-crossing to another) in 
an oriented knot or link diagram by elements of a fixed quandle
\cite{FR,J,M}. In \cite{FRS} and later \cite{KR}, quandles were generalized 
to \textit{biquandles} which can be used to distinguish oriented knots and 
links by counting colorings of the \textit{semiarcs} (the portions going 
from one under-crossing or over-crossing to another) in an oriented knot 
or link diagram.

In previous work such as \cite{I,IIJO,L}, quandles and related structures such
as \textit{$G$-families of quandles} and \textit{qualgebras} were used to
define invariants of spatial trivalent graphs and related structures 
such as handlebody-knots by 
coloring the arcs (now defined as portions going from one under-crossing or 
vertex to another). In this paper we generalize these structures to structures 
for coloring the semiarcs of handlebody-knot diagrams, now defined as
the portions of the diagram between under-crossing points, over-crossing 
points, and vertices.

The paper is organized as follows.
In Section \ref{B} we review the basics of biquandles and the counting 
invariant. In Section \ref{NPB} we introduce \textit{$n$-parallel biquandles}.
In Section \ref{GFB} we extend the notion of $G$-families of quandles to
the biquandle case. In Section \ref{MDB} we introduce partially multiplicative
biquandles and a special case, group decomposable biquandles. In Section 
\ref{Inv} we discuss invariants defined using these structures and provide
examples of their computation. We conclude in Section \ref{Q} with some 
questions for future research.

\section{Biquandles}\label{B}

We begin with a definition. (See \cite{EN} for more).

\begin{definition}
A \textit{biquandle} is a set $X$ with maps $\utr,\otr:X\times X\to X$
satisfying
\begin{itemize}
\item[(i)] For all $x\in X$, $x\utr x=x\otr x$,
\item[(ii)] For each $y\in X$, the maps $\alpha_y,\beta_y:X\to X$ and
$S:X\times X\to X\times X$ defined by
\[\alpha_y(x)=x\otr y,\quad \beta_y(x)=x\utr y \quad
\mathrm{and}\quad S(x,y)=(y\otr x,\ x\utr y)\] are bijective, and
\item[(iii)] The \textit{exchange laws} are satisfied:
\[\begin{array}{rcl}
(x\utr y)\utr (z\utr y) & = &(x\utr z)\utr (y\otr z) \\
(x\utr y)\otr (z\utr y) & = &(x\otr z)\utr (y\otr z) \\
(x\otr y)\otr (z\otr y) & = &(x\otr z)\otr (y\utr z).
\end{array}\]
\end{itemize}
A biquandle in which $x\otr y=x$ for all $x,y\in X$ is a \textit{quandle}.
\end{definition}

\begin{example}
For any set $X$ and bijection $\sigma:X\to X$, the 
operations $x\utr y=x\otr y=\sigma(x)$ define a biquandle called a 
\textit{constant action biquandle}.
\end{example}

\begin{example}
For any abelian group $A$ with automorphisms $s,t:A\to A$, the operations
\[x\utr y=t(x-y)+ s(y),\quad x\otr y=s(x)\]
define a biquandle called an \textit{Alexander biquandle}.
\end{example}

\begin{example}
For any group $G$, the operations $x\utr y=y^{-1}xy$ and $x\otr y=x$ define
a biquandle (indeed, a quandle) structure known as the \textit{conjugation
quandle} of $G$.
\end{example}

The biquandle axioms are motivated by the Reidemeister moves for oriented
knots and links. Specifically, if $X$ is a biquandle then an assignment of 
an element of $X$ to each semiarc in an oriented knot or link diagram
is a \textit{biquandle coloring} of the diagram if at every crossing we have
\[\includegraphics{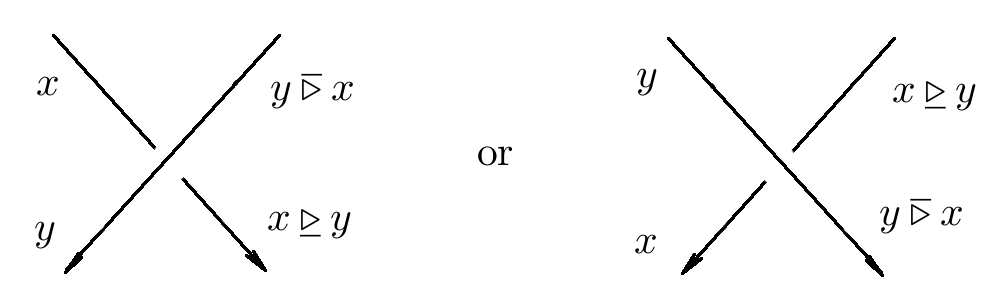}\]

Recall that two knot or link diagrams represent ambient isotopic knots or links
if and only if they differ by a sequence of \textit{Reidemeister moves}:
\[\includegraphics{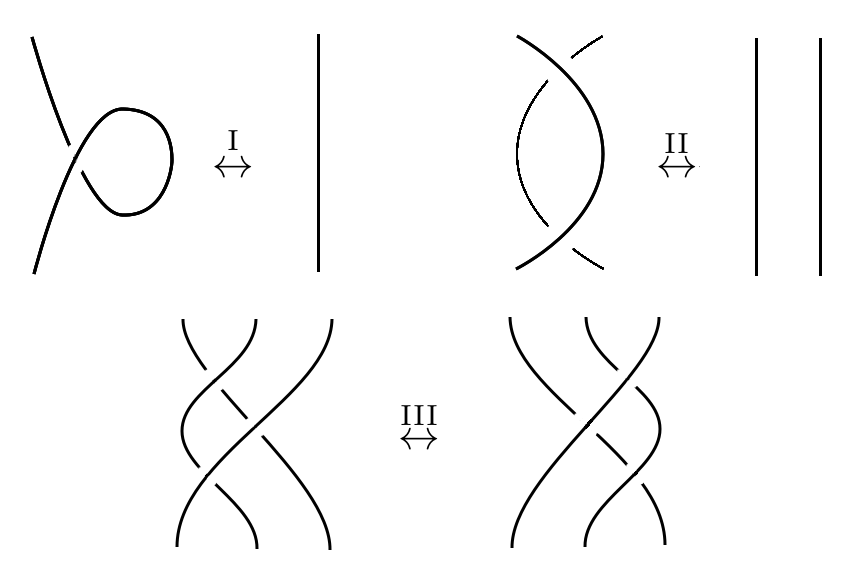}\]
 
It is then easy to check the following standard result (see also \cite{EN}).
\begin{theorem}
Let $D$ be an oriented knot or link diagram with a choice of biquandle 
coloring. Then for any Reidemeister move $\Omega$, there is a unique biquandle
coloring of the diagram $D'$ obtained from $D$ by applying $\Omega$ which 
agrees with the coloring on $D$ outside the neighborhood of the move.
\end{theorem}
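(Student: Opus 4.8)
The plan is to prove the statement by a case analysis over the Reidemeister moves, using the biquandle axioms to produce the extended coloring and the bijectivity conditions in axiom (ii) to see that it is forced. Since a move $\Omega$ is supported in a disk $B$ meeting $D$ in a fixed tangle, the semiarcs of $D$ and $D'$ outside $B$ coincide and keep their colors; what must be shown is that, given the prescribed colors on the finitely many semiarcs crossing $\partial B$, there is exactly one way to color the interior semiarcs of the post-move tangle so that the crossing condition holds at every crossing inside $B$.

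First I would cut down the number of cases. It suffices to treat a generating set of oriented Reidemeister moves: an arbitrary oriented version of a move is obtained from a chosen representative by pre- and post-composing with Reidemeister II moves (to reverse strand orientations and swap over/under data), so once II is handled together with one oriented version of I and one oriented version of III, the remaining variants follow formally. This reduces the argument to three essential verifications.

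For Reidemeister I, the move replaces a single semiarc colored $x$ by a kinked strand carrying one new crossing and (depending on the sense of the kink) one new interior semiarc; writing the crossing condition at the new crossing and using the bijectivity of $\alpha_y$ or $\beta_y$ solves uniquely for that interior semiarc, while the self-consistency of the crossing is exactly axiom (i), $x\utr x=x\otr x$. For Reidemeister II, the boundary semiarcs on one side carry colors $x,y$, and running the pair of crossings across the bigon applies one of the maps $S$, $S^{-1}$, $\alpha_y^{\pm1}$ or $\beta_y^{\pm1}$ according to the relative orientation; bijectivity in axiom (ii) then gives a unique consistent choice for the two interior semiarcs, and since the post-move tangle has no crossings nothing further is required. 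For Reidemeister III, the three boundary strands enter with colors $x,y,z$, and one computes the colors of all six interior semiarcs on each side of the triangle by repeated application of the crossing rule; the two resulting triples of outgoing colors agree precisely because of the three exchange laws of axiom (iii), each law matching the color of one of the three outgoing strands, and uniqueness of the interior colors is immediate since each is given by an explicit word in $x,y,z$ under $\utr,\otr$.

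The one genuinely fiddly point, and the main obstacle, is the bookkeeping of orientations and crossing signs: one must fix a convention for which argument of $\utr,\otr$ is the understrand and how the rule $S(x,y)=(y\otr x,\ x\utr y)$ distributes over positive and negative crossings, and then carry that convention consistently through the kink, the bigon, and the triangle. Once the conventions are pinned down, each verification is a short direct computation, and the uniqueness half of the statement is automatic, since at every stage the unknown interior colors are determined from the boundary colors through the invertible structure maps of $X$.
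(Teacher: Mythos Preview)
Your case analysis is correct and is exactly the standard argument: axiom~(i) handles the kink in Reidemeister~I, the bijectivity of $\alpha_y$, $\beta_y$, and $S$ in axiom~(ii) gives existence and uniqueness across Reidemeister~II, and the three exchange laws in axiom~(iii) match the three outgoing strands of Reidemeister~III. The paper itself does not supply a proof at all; it simply records the theorem as a standard fact with a pointer to \cite{EN}, so you have in fact written out more than the paper does.

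One small caveat on your reduction step: the claim that every oriented variant of a Reidemeister move follows from a single chosen representative together with type~II moves is a little glib. It is true that there are small generating sets of oriented Reidemeister moves (Polyak's work is the usual reference), but the precise mechanism is not just ``pre- and post-compose with II to reverse orientations and swap over/under''; in particular, two independent type~I moves of opposite sign are typically needed in a generating set. For the purpose at hand this does not matter, since once you have the biquandle axioms the direct check of each oriented variant is short, but if you want to keep the reduction argument you should either cite a specific generating-set theorem or simply verify all oriented cases directly.
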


\begin{definition}
Let $X$ be a biquandle and $D$ an oriented knot or link diagram. Then the
set of biquandle colorings of $D$ by $X$ is denoted $\mathcal{C}_X(D)$.
\end{definition}

Denote the cardinaility of $\mathcal{C}_X(D)$ by $|\mathcal{C}_X(D)|$. 
Then  we have the following:
\begin{corollary}
Let $X$ be a finite biquandle. Then for any two diagrams $D,D'$ of an oriented 
knot or link $L$, we have $|\mathcal{C}_X(D) |=|\mathcal{C}_X(D')|$.
\end{corollary}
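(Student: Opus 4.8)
The plan is to deduce this directly from the preceding Theorem together with Reidemeister's theorem. Since $D$ and $D'$ are diagrams of the same oriented knot or link $L$, there is a finite sequence of diagrams $D=D_0,D_1,\dots,D_n=D'$ in which each $D_{k+1}$ is obtained from $D_k$ by a single Reidemeister move $\Omega_k$; planar isotopies of a diagram may be absorbed into this list or treated separately, since they visibly induce bijections on coloring sets by simply relabeling the semiarcs. It therefore suffices to produce, for each $k$, a bijection $\mathcal{C}_X(D_k)\to\mathcal{C}_X(D_{k+1})$ and then compose them.

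For a fixed move $\Omega=\Omega_k$ carrying $D_k$ to $D_{k+1}$, the Theorem supplies a well-defined function $\Phi_\Omega:\mathcal{C}_X(D_k)\to\mathcal{C}_X(D_{k+1})$: given a coloring $C$ of $D_k$, the image $\Phi_\Omega(C)$ is the unique biquandle coloring of $D_{k+1}$ that agrees with $C$ on the semiarcs lying outside the disk where the move is performed. I would then show $\Phi_\Omega$ is a bijection by exhibiting its inverse. The reverse move $\Omega^{-1}$ is again a Reidemeister move, carrying $D_{k+1}$ back to $D_k$, so the Theorem likewise furnishes $\Phi_{\Omega^{-1}}:\mathcal{C}_X(D_{k+1})\to\mathcal{C}_X(D_k)$. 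For any coloring $C$ of $D_k$, the composite $\Phi_{\Omega^{-1}}(\Phi_\Omega(C))$ is a coloring of $D_k$ agreeing with $C$ outside the move disk; since $C$ is itself such a coloring, the uniqueness clause of the Theorem forces $\Phi_{\Omega^{-1}}(\Phi_\Omega(C))=C$. By the symmetric argument $\Phi_\Omega\circ\Phi_{\Omega^{-1}}=\mathrm{id}$, so $\Phi_\Omega$ is a bijection.

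Composing the bijections $\Phi_{\Omega_0},\dots,\Phi_{\Omega_{n-1}}$ yields a bijection $\mathcal{C}_X(D)\to\mathcal{C}_X(D')$, whence $|\mathcal{C}_X(D)|=|\mathcal{C}_X(D')|$. Finiteness of $X$ guarantees these cardinalities are finite — at most $|X|^{s}$, where $s$ is the number of semiarcs of the diagram — so the common value is a well-defined nonnegative integer associated to $L$. The argument is essentially bookkeeping; the only step requiring genuine care is the invertibility of $\Phi_\Omega$, and that is precisely where the word \emph{unique} in the statement of the Theorem is used: without it one would obtain only that the two coloring sets are linked by maps in both directions, not that these maps are mutually inverse.
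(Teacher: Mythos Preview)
Your proof is correct and is precisely the argument the paper has in mind: the Corollary is stated without proof, as an immediate consequence of the preceding Theorem together with Reidemeister's theorem, and your write-up simply makes explicit the standard bijection-by-uniqueness reasoning. There is nothing to add.
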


\begin{definition}
For any biquandle $X$, the number $|\mathcal{C}_X(K)|$ of biquandle colorings 
of $K$ by $X$ is called the \textit{biquandle counting invariant} of $K$ with
respect to the biquandle $X$, denoted $\Phi^{\mathbb{Z}}_X(K)$.
\end{definition}

\begin{example}
Let $X=\mathbb{Z}_3=\mathbb{Z}/3\mathbb{Z}$ and set $t=1$ and $s=2$; then $X$ is an Alexander 
biquandle with operations $x\utr y=x+y$ and $x\otr y=2x$. We can compute
the biquandle counting invariant by row-reducing the matrices expressing 
the crossing relations over $\mathbb{Z}_3$. For example, the Hopf link
\[\includegraphics{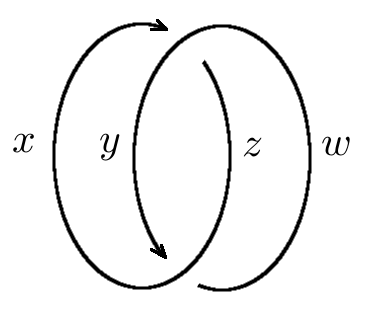}\]
has crossing equations $x+y=z,$ $2y=w$, $y+x=w$, $2x=z$ and thus $X$-coloring 
matrix which row-reduces over $\mathbb{Z}_3$ to
\[\left[\begin{array}{rrrr}
1 & 1 & 2 & 0 \\
0 & 2 & 0 & 2 \\
1 & 1 & 0 & 2 \\
2 & 0 & 2 & 0
\end{array}\right]%\stackrel{\mathrm{Row\ moves\ over}\ \mathbb{Z}_3}{
\longleftrightarrow%}
\left[\begin{array}{rrrr}
1 & 0 & 0 & 1 \\
0 & 1 & 0 & 1 \\
0 & 0 & 1 & 2 \\
0 & 0 & 0 & 0
\end{array}\right]
\]
so $|\mathcal{C}_X(\mathrm{Hopf\ Link})|=|X|=3$. The unlink of two
circles has 9 colorings, and hence the invariant detects the non-triviality 
of the  Hopf link.
\end{example}

\section{$n$-Parallel Biquandles}\label{NPB}

We would like to extend biquandles to algebraic structures suitable for 
defining counting invariants for spatial trivalent graphs and their quotient
structure, handlebody-knots. We will first develop some new notation.

\begin{definition}
Let $(X,\utr ,\otr )$ be a biquandle.
For $n>0$, we define
\begin{align*}
&a\utr^{[0]}b=a,
&&a\otr^{[0]}b=a, \\
&a\utr^{[n]}b=(a\utr^{[n-1]}b)\utr (b\utr^{[n-1]}b), \quad \mathrm{and}
&&a\otr^{[n]}b=(a\otr^{[n-1]}b)\otr (b\otr^{[n-1]}b).
\end{align*}
\end{definition}

\begin{example} Let $X$ be a biquandle. Then we have
\begin{align*}
&a\utr^{[1]}b=a\utr b,
&&a\utr^{[2]}b=(a\utr b)\utr (b\utr b),
&&a\utr^{[3]}b=((a\utr b)\utr (b\utr b))\utr ((b\utr b)\utr (b\utr b))
\end{align*}
et cetera.
\end{example}

\begin{remark}
%If $(X,\utr ,\otr )$ is a quandle with $a\utr b=a*b$ and $a\otr b=a$, 
%we have $a\utr^{[n]}b=a*^nb$, $a\otr^{[n]}b=a$.

If $(X,\underline{\triangleright},\overline{\triangleright})$
is a \textit{quandle}, i.e. a biquandle with $a\overline{\triangleright}b=a$
for all $a,b\in X$, then we have
\[a\underline{\triangleright}^{[n]}b
=a\underline{\triangleright}^nb
=\beta_b^n(a)\quad\mathrm{and}\quad
a\overline{\triangleright}^{[n]}b=a\]
where $\beta_y(x)=x\utr y$.
\end{remark}

Let $(X,\utr,\otr)$ be a biquandle. We will show via a series of lemmas that 
$(X,\utr^{[n]},\otr^{[n]})$ is also a biquandle.

\begin{lemma}\label{lem:3.3}
For $m,n\geq0$, we have
\begin{align*}
&(a\utr^{[m]}b)\utr^{[n]}(b\utr^{[m]}b)=a\utr^{[m+n]}b,
&&(a\otr^{[m]}b)\otr^{[n]}(b\otr^{[m]}b)=a\otr^{[m+n]}b.
\end{align*}
\end{lemma}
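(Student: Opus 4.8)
The plan is to prove both identities by induction on $n$, keeping $m$, $a$, and $b$ free at each stage. Note first that the two statements are formally independent — the one for $\utr^{[m+n]}$ mentions only $\utr$, the one for $\otr^{[m+n]}$ only $\otr$ — and that in fact neither uses any biquandle axiom whatsoever, so it suffices to prove the $\utr$ statement and then read off the $\otr$ statement by replacing every $\utr$ with $\otr$. The base case $n=0$ is immediate: since $c\utr^{[0]}d=c$ by definition, we get $(a\utr^{[m]}b)\utr^{[0]}(b\utr^{[m]}b)=a\utr^{[m]}b=a\utr^{[m+0]}b$.

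For the inductive step I assume the identity holds for some fixed $n$ and all $a,b\in X$ and all $m\geq0$, and I abbreviate $p=a\utr^{[m]}b$ and $q=b\utr^{[m]}b$. The defining recursion applied to the pair $(p,q)$ gives $p\utr^{[n+1]}q=(p\utr^{[n]}q)\utr(q\utr^{[n]}q)$. I then invoke the induction hypothesis twice: on the pair $(a,b)$ it yields $p\utr^{[n]}q=a\utr^{[m+n]}b$, and on the pair $(b,b)$ it yields $q\utr^{[n]}q=b\utr^{[m+n]}b$. Substituting, $p\utr^{[n+1]}q=(a\utr^{[m+n]}b)\utr(b\utr^{[m+n]}b)$, and one more application of the defining recursion — this time for $\utr^{[m+n+1]}$ on the pair $(a,b)$ — identifies the right-hand side with $a\utr^{[m+n+1]}b=a\utr^{[m+(n+1)]}b$. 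This closes the induction, and the $\otr$ case follows verbatim.

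There is no substantive obstacle here; the one point demanding care is that the induction hypothesis must be stated with its arguments universally quantified (not merely for $a$ with $b$ held fixed), since the step applies it once with $(a,b)$ and once with $(b,b)$. It is also worth recording, as the special case $m=1$, the identity $(a\utr b)\utr^{[n]}(b\utr b)=a\utr^{[n+1]}b$ (and its $\otr$-analogue): that is, $\utr^{[n+1]}$ may equally well be built up by peeling a single application of $\utr$ off the inside rather than off the outside, which is the form most convenient in the later lemmas.
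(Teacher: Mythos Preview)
Your proof is correct and follows essentially the same route as the paper: induction on $n$, with the base case $n=0$ immediate and the inductive step unwinding the recursive definition of $\utr^{[n+1]}$, applying the hypothesis once with first argument $a$ and once with first argument $b$, then recognizing the defining recursion for $\utr^{[m+n+1]}$. Your side remarks (that no biquandle axiom is used, that the $\utr$ and $\otr$ cases are formally independent, and the $m=1$ special case) are accurate and helpful but not in the paper's version.
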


\begin{proof}
%\begin{itemize}
%\item
The proof is by induction on $n$.
When $n=0$, we have
\[(a\utr^{[m]}b)\utr^{[0]}(b\utr^{[m]}b)
=(a\utr^{[m]}b)=(a\utr^{[m+0]}b).\]
Now suppose that the equality holds for $n<k$.
Then we have
\begin{align*}
(a\utr^{[m]}b)\utr^{[k]}(b\utr^{[m]}b)
&=((a\utr^{[m]}b)\utr^{[k-1]}(b\utr^{[m]}b))
\utr ((b\utr^{[m]}b)\utr^{[k-1]}(b\utr^{[m]}b)) \\
&=(a\utr^{[m+k-1]}b)\utr (b\utr^{[m+k-1]}b)
=a\utr^{[m+k]}b
\end{align*}
as required.
%\end{itemize}
In the same way, we can prove
$(a\otr^{[m]}b)\otr^{[n]}(b\otr^{[m]}b)=a\otr^{[m+n]}b$.
\end{proof}

We now verify that $(X,\utr^{[n]}, \otr^{[n]})$ satisfies the biquandle axioms.

\begin{lemma}\label{lem:3.4}
For $n\geq0$, we have $a\utr^{[n]}a=a\otr^{[n]}a$. 
\end{lemma}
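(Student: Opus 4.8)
The plan is to prove $a\utr^{[n]}a=a\otr^{[n]}a$ for all $n\geq 0$ by induction on $n$, using biquandle axiom (i) as the base case and the recursive definition of $\utr^{[n]},\otr^{[n]}$ together with axiom (i) again in the inductive step. For $n=0$ both sides equal $a$, and for $n=1$ the claim is exactly axiom (i). So assume $a\utr^{[k-1]}a=a\otr^{[k-1]}a$ for some $k\geq 1$; call this common value $a'$. Then by definition
\[
a\utr^{[k]}a=(a\utr^{[k-1]}a)\utr(a\utr^{[k-1]}a)=a'\utr a',
\]
and similarly
\[
a\otr^{[k]}a=(a\otr^{[k-1]}a)\otr(a\otr^{[k-1]}a)=a'\otr a'.
\]
Since axiom (i) gives $a'\utr a'=a'\otr a'$, we conclude $a\utr^{[k]}a=a\otr^{[k]}a$, completing the induction.

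I expect this to be entirely routine: the only slightly delicate point is making sure the induction is set up so that the inductive hypothesis is applied to the \emph{same} element $a$ (both in the underline and overline towers), which is automatic here because both recursions feed $b=a$ into themselves, so the diagonal stays on the diagonal at every level. There is no real obstacle; unlike the later biquandle-axiom verifications for $(X,\utr^{[n]},\otr^{[n]})$ (bijectivity of $\alpha_y,\beta_y,S$ and the exchange laws), which will presumably require Lemma \ref{lem:3.3} and more careful bookkeeping, this lemma needs only axiom (i) and unwinding the definitions one step.
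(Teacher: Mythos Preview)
Your proof is correct and follows essentially the same approach as the paper: induction on $n$, with the $n=0$ case trivial and the inductive step reducing to biquandle axiom (i) applied to the element $a\utr^{[k-1]}a=a\otr^{[k-1]}a$. The only cosmetic difference is that you name this common value $a'$, which makes the application of axiom (i) slightly more explicit.
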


\begin{proof}
%\begin{itemize}
%\item
Again, we proceed by induction on $n$.
When $n=0$, we have $a\utr^{[0]}a=a=a\otr^{[0]}a$.
%\item
Then suppose that the equality holds for $n<k$.
Then we have
\[ a\utr^{[k]}a
=(a\utr^{[k-1]}a)\utr (a\utr^{[k-1]}a)
=(a\otr^{[k-1]}a)\otr (a\otr^{[k-1]}a)
=a\otr^{[k]}a \]
as required.
%\end{itemize}
\end{proof}

Next, we verify the exchange laws.

\begin{lemma}\label{lem:3.5}
For $m,n\geq0$, we have
\begin{align*}
(a\utr^{[m]}b)\utr^{[n]}(c\otr^{[m]}b)
&=(a\utr^{[n]}c)\utr^{[m]}(b\utr^{[n]}c), \\
(a\otr^{[m]}b)\utr^{[n]}(c\otr^{[m]}b)
&=(a\utr^{[n]}c)\otr^{[m]}(b\utr^{[n]}c), \\
(a\otr^{[m]}b)\otr^{[n]}(c\otr^{[m]}b)
&=(a\otr^{[n]}c)\otr^{[m]}(b\utr^{[n]}c).
\end{align*}
\end{lemma}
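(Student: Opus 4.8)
The plan is to prove the three identities simultaneously by a two-stage induction, using only the recursive definitions of $\utr^{[k]}$ and $\otr^{[k]}$, Lemma \ref{lem:3.4}, and the biquandle axioms. Note first that the $m=n=1$ instances of the three displayed equations are exactly the exchange laws of axiom (iii), so those cases cost nothing; the task is to bootstrap from them.

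\textbf{Stage one: the case $m=1$, for all $n\ge 0$.} Fix $m=1$ and induct on $n$. For $n=0$, both sides of the first, second, and third equation reduce to $a\utr b$, $a\otr b$, $a\otr b$ respectively, and $n=1$ is axiom (iii). For $n\ge 2$, expand the outer operation on the left by its defining recursion; for the first equation this reads
\[
(a\utr b)\utr^{[n]}(c\otr b)=\bigl((a\utr b)\utr^{[n-1]}(c\otr b)\bigr)\utr\bigl((c\otr b)\utr^{[n-1]}(c\otr b)\bigr).
\]
Rewrite the first factor by the induction hypothesis for the first equation at $(1,n-1)$, and the second factor (of the form $u\utr^{[n-1]}u$ with $u=c\otr b$) by the induction hypothesis for the second equation at $(1,n-1)$ with $a$ replaced by $c$. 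With $p=a\utr^{[n-1]}c$, $q=b\utr^{[n-1]}c$, $r=c\utr^{[n-1]}c$, the left side becomes $(p\utr q)\utr(r\otr q)$; one use of the exchange law (the $m=n=1$ first equation again) turns this into $(p\utr r)\utr(q\utr r)$, and since $p\utr r=a\utr^{[n]}c$ and $q\utr r=b\utr^{[n]}c$ by definition, this is the right side. The second equation is identical in form; the third is too, except that there one also invokes Lemma \ref{lem:3.4} to identify $c\otr^{[n-1]}c$ with $c\utr^{[n-1]}c$.

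\textbf{Stage two: general $m$, for all $n\ge 0$.} Induct on $m$; $m=0$ is immediate and $m=1$ is Stage one. For $m\ge 2$, expand both $\cdot^{[m]}$-operations on the left by their recursions, using Lemma \ref{lem:3.4} to identify the two inner terms $b\utr^{[m-1]}b$ and $b\otr^{[m-1]}b$, whose common value we call $B$; put $A:=a\utr^{[m-1]}b$ for the first equation and $A:=a\otr^{[m-1]}b$ for the other two, and $C:=c\otr^{[m-1]}b$. Then the left side of each equation is exactly the corresponding $m=1$ identity from Stage one with $(a,b,c)$ replaced by $(A,B,C)$, and applying that identity rewrites it as $(A\utr^{[n]}C)\utr(B\utr^{[n]}C)$, $(A\utr^{[n]}C)\otr(B\utr^{[n]}C)$, or $(A\otr^{[n]}C)\otr(B\utr^{[n]}C)$ according as we are proving the first, second, or third equation. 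Now the induction hypothesis at $m-1$, used once with the given variables and once with $a$ replaced by $b$, rewrites each factor as a $\cdot^{[m-1]}$-product built from $a\utr^{[n]}c$, $a\otr^{[n]}c$ and $b\utr^{[n]}c$; a final appeal to the recursion $x\utr^{[m]}y=(x\utr^{[m-1]}y)\utr(y\utr^{[m-1]}y)$ (or its $\otr$-analogue) reassembles these into the right side. As in Stage one, the third equation is the only one that invokes a sibling --- the second --- at stage $(m-1,n)$.

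The argument is conceptually routine; the real work is the bookkeeping. All three equations have to be carried along together through both inductions, and at each step one must check that the variable substitution ($a\mapsto b$ or $a\mapsto c$) and each swap between $\utr^{[k]}$ and $\otr^{[k]}$ licensed by Lemma \ref{lem:3.4} genuinely produces an instance of the hypothesis that is available at that point, so that the recursion terminates. Lemma \ref{lem:3.3} is not needed for this proof, though it furnishes an alternative way to collapse the repeated-argument factors $(c\otr^{[m]}b)\utr^{[k]}(c\otr^{[m]}b)$ that arise.
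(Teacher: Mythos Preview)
Your proof is correct and follows essentially the same two-stage double induction as the paper, only with the roles of $m$ and $n$ interchanged: the paper first fixes $n=1$ and inducts on $m$, then inducts on $n$ for general $m$, whereas you first fix $m=1$ and induct on $n$, then induct on $m$ for general $n$. The bookkeeping and the appeals to Lemma~\ref{lem:3.4} and the exchange laws line up symmetrically, so the two arguments are mirror images of one another.
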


\begin{proof}
%\begin{itemize}
%\item
When $n=0$, we have
\begin{align*}
&(a\utr^{[m]}b)\utr^{[0]}(c\otr^{[m]}b)
=a\utr^{[m]}b=(a\utr^{[0]}c)\utr^{[m]}(b\utr^{[0]}c), \\
&(a\otr^{[m]}b)\utr^{[0]}(c\otr^{[m]}b)
=a\otr^{[m]}b=(a\utr^{[0]}c)\otr^{[m]}(b\utr^{[0]}c), \\
&(a\otr^{[m]}b)\otr^{[0]}(c\otr^{[m]}b)
=a\otr^{[m]}b=(a\otr^{[0]}c)\otr^{[m]}(b\utr^{[0]}c).
\end{align*}
%\item
When $m=0$, we have
\begin{align*}
&(a\utr^{[0]}b)\utr^{[n]}(c\otr^{[0]}b)
=a\utr^{[n]}c
=(a\utr^{[n]}c)\utr^{[0]}(b\utr^{[n]}c), \\
&(a\otr^{[0]}b)\utr^{[n]}(c\otr^{[0]}b)
=a\utr^{[n]}c
=(a\utr^{[n]}c)\otr^{[0]}(b\utr^{[n]}c), \\
&(a\otr^{[0]}b)\otr^{[n]}(c\otr^{[0]}b)
=a\otr^{[n]}c
=(a\otr^{[n]}c)\otr^{[0]}(b\utr^{[n]}c).
\end{align*}
%\item
Suppose that the three equalities hold for $n=1$, $m<k$.
Then we have
\begin{align*}
(a\utr^{[k]}b)\utr (c\otr^{[k]}b)
&=((a\utr^{[k-1]}b)\utr (b\utr^{[k-1]}b))
\utr ((c\otr^{[k-1]}b)\otr (b\otr^{[k-1]}b)) \\
&=((a\utr^{[k-1]}b)\utr (c\otr^{[k-1]}b))
\utr ((b\utr^{[k-1]}b)\utr (c\otr^{[k-1]}b)) \\
&=((a\utr c)\utr^{[k-1]}(b\utr c))
\utr ((b\utr c)\utr^{[k-1]}(b\utr c)) \\
&=(a\utr c)\utr^{[k]}(b\utr c).
\end{align*}
In the same way, we have
\begin{align*}
&(a\otr^{[m]}b)\utr (c\otr^{[m]}b)
=(a\utr c)\otr^{[m]}(b\utr c), \\
&(a\otr^{[m]}b)\otr (c\otr^{[m]}b)
=(a\otr c)\otr^{[m]}(b\utr c).
\end{align*}
%\item
Suppose that the three equalities hold for $n<k$.
Then we have
\begin{align*}
(a\utr^{[m]}b)\utr^{[k]}(c\otr^{[m]}b)
&=((a\utr^{[m]}b)\utr^{[k-1]}(c\otr^{[m]}b))
\utr ((c\otr^{[m]}b)\utr^{[k-1]}(c\otr^{[m]}b)) \\
&=((a\utr^{[k-1]}c)\utr^{[m]}(b\utr^{[k-1]}c))
\utr ((c\utr^{[k-1]}c)\otr^{[m]}(b\utr^{[k-1]}c)) \\
&=((a\utr^{[k-1]}c)\utr (c\utr^{[k-1]}c))
\utr^{[m]}((b\utr^{[k-1]}c)\utr (c\utr^{[k-1]}c)) \\
&=(a\utr^{[k]}c)\utr^{[m]}(b\utr^{[k]}c).
\end{align*}
In the same way, we have
\begin{align*}
&(a\otr^{[m]}b)\utr^{[n]}(c\otr^{[m]}b)
=(a\utr^{[n]}c)\otr^{[m]}(b\utr^{[n]}c) \quad \mathrm{and} \\
&(a\otr^{[m]}b)\otr^{[n]}(c\otr^{[m]}b)
=(a\otr^{[n]}c)\otr^{[m]}(b\utr^{[n]}c)
\end{align*}
as required.
%\end{itemize}
\end{proof}

\begin{lemma}\label{lem:3.6}
Let $m,n\geq0$. Then
\begin{enumerate}
\item[(i)]
The maps $\utr^{[n]}a:X\to X$, $\otr^{[n]}a:X\to X$ are bijections for each $a\in X$.
\item[(ii)]
The map $S_{m,n}:X\times X\to X\times X;(x,y)\mapsto(y\otr^{[m]}x,x\utr^{[n]}y)$ is 
a bijection.
In particular, we have
%\[ a\utr^{[n]}a=b\utr^{[n]}b
%\Leftrightarrow a\otr^{[n]}a=b\otr^{[n]}b
%\Rightarrow a=b. \]
\[a\underline{\triangleright}^{[n]}a=b\underline{\triangleright}^{[n]}b
\Leftrightarrow
a=b
\Leftrightarrow
a\overline{\triangleright}^{[n]}a=b\overline{\triangleright}^{[n]}b\]
\end{enumerate}
\end{lemma}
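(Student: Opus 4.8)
The plan is to dispatch part~(i) by a short induction and then to use it, together with the exchange laws, to establish part~(ii) by a double induction; the ``in particular'' clause will then drop out of part~(ii) and Lemma~\ref{lem:3.4}.

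For part~(i) I would induct on $n$. The case $n=0$ is trivial since $\utr^{[0]}a=\otr^{[0]}a=\mathrm{id}_X$. For the step, the defining recursion gives $x\utr^{[n]}a=(x\utr^{[n-1]}a)\utr(a\utr^{[n-1]}a)$, so setting $c:=a\utr^{[n-1]}a\in X$ the map $x\mapsto x\utr^{[n]}a$ is the composite $\beta_c\circ(x\mapsto x\utr^{[n-1]}a)$ of the biquandle bijection $\beta_c$ (axiom~(ii)) with a bijection supplied by the inductive hypothesis, hence is a bijection; the argument for $\otr^{[n]}a$ is identical, using $\alpha_{c'}$ with $c':=a\otr^{[n-1]}a$ in place of $\beta_c$ (note $c=c'$ by Lemma~\ref{lem:3.4}, although this is not needed).

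For part~(ii) I would prove that $S_{m,n}$ is bijective by induction on $m+n$. The base cases are $m=0$, where $S_{0,n}(x,y)=(y,\,x\utr^{[n]}y)$ recovers $y$ from the first coordinate and then $x$ from the second by part~(i); symmetrically $n=0$; and $m=n=1$, where $S_{1,1}=S$ is bijective by axiom~(ii). For the inductive step with $m,n\geq1$, I would use Lemma~\ref{lem:3.3} to peel off one layer of each operation --- writing, e.g., $x\utr^{[n]}y=(x\utr y)\utr^{[n-1]}(y\utr y)$ and $y\otr^{[m]}x=(y\otr x)\otr^{[m-1]}(x\otr x)$, with $x\otr x=x\utr x$ by axiom~(i) --- and thereby exhibit $S_{m,n}$ as a composite of $S$ itself (applied to a single pair of labels), bijections of part~(i), and instances of $S_{m',n'}$ with $m'+n'<m+n$, to which the inductive hypothesis applies. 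The delicate point in verifying this factorization --- and the step I expect to be the main obstacle --- is that, after one biquandle crossing move, the remaining crossing labels must still form a ``parallel family'' of the shape to which the hypothesis applies; this compatibility between the ``cabling'' built into $\utr^{[n]},\otr^{[n]}$ and the sideways map $S$ is exactly what the three exchange identities of Lemma~\ref{lem:3.5} (specialized appropriately and combined with axiom~(i)) are there to provide, and it will itself require a subsidiary induction. Since every factor in the composite is a bijection, so is $S_{m,n}$.

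Finally, the ``in particular'' clause follows at once: with $m=n$, part~(ii) gives that $S_{n,n}$ is injective, and by Lemma~\ref{lem:3.4} we have $S_{n,n}(a,a)=(a\otr^{[n]}a,\,a\utr^{[n]}a)=(a\utr^{[n]}a,\,a\utr^{[n]}a)$; hence $a\utr^{[n]}a=b\utr^{[n]}b$ forces $S_{n,n}(a,a)=S_{n,n}(b,b)$ and so $a=b$, the reverse implication being trivial, and the equivalence with $a\otr^{[n]}a=b\otr^{[n]}b$ then follows by applying Lemma~\ref{lem:3.4} once more.
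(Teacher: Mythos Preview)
Your treatment of part~(i) and of the ``in particular'' clause is correct and essentially matches the paper.

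For part~(ii), you and the paper share the core insight---$S_{m,n}$ should decompose into individual crossings $S$, with Lemma~\ref{lem:3.5} supplying the compatibility---but the organization is genuinely different. You propose induction on $m+n$, factoring $S_{m,n}$ through $S$, part-(i) bijections, and smaller $S_{m',n'}$. The paper does not induct: it constructs the inverse explicitly by introducing a grid of elements $a_{i,j}=(x\utr^{[i]}y)\utr^{[j]}(x\utr^{[i]}y)$ and $a^{i,j}=(y\otr^{[j]}x)\otr^{[i]}(y\otr^{[j]}x)$, using Lemma~\ref{lem:3.5} to verify $S(a_{i,j},a^{i,j})=(a^{i,j+1},a_{i+1,j})$, and then recovering $(x,y)=(a_{0,0},a^{0,0})$ from the output $(a^{0,m},a_{n,0})$ by sweeping $S^{-1}$ across the grid row by row.

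Be warned that the factorization you sketch is subtler than it sounds. Peeling one layer from each gives $y\otr^{[m]}x=(y\otr x)\otr^{[m-1]}(x\otr x)$ and $x\utr^{[n]}y=(x\utr y)\utr^{[n-1]}(y\utr y)$, but after applying $S$ the auxiliary labels $x\otr x$ and $y\utr y$ are \emph{not} the components of the current pair, so ``now apply a part-(i) bijection'' is not a well-defined self-map of $X\times X$, and the result is not an $S_{m-1,n-1}$ of anything obvious. Your candid remark that this is the ``delicate point'' needing a ``subsidiary induction'' is exactly right; carrying it out amounts to rediscovering the paper's grid bookkeeping, which is what makes all the hidden labels visible and turns the vague factorization into a concrete sequence of $S^{-1}$ moves.
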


\begin{proof}
%\begin{enumerate}
%\item
%\begin{itemize}
%\item
(i) From
$x\utr^{[n-1]}a=(x\utr^{[n]}a)\utr^{-1}(a\utr^{[n-1]}a)$,
the map $\utr^{[n]}a:X\to X$ is bijective for each $a\in X$.
%\item
In the same way, we see that the map $\otr^{[n]}a:X\to X$ 
is bijective for each $a\in X$.
%\end{itemize}

%\item
(ii) Set
$a_{i,j}=(x\utr^{[i]}y)\utr^{[j]}(x\utr^{[i]}y)$ and
$a^{i,j}=(y\otr^{[j]}x)\otr^{[i]}(y\otr^{[j]}x)$.
Then
\[ S_{m,n}(a_{0,0},a^{0,0})=(a^{0,m},a_{n,0}). \]
%\begin{itemize}
%\item
Since $a_{i,j}=a_{i,0}\utr^{[j]}a_{i,0}$,
$a_{n,1},\ldots,a_{n,m-1}$ are uniquely determined from $a_{n,0}$, and
%\item
since $a^{i,j}=a^{0,j}\otr^{[i]}a^{0,j}$,
$a^{1,m},\ldots,a^{n-1,m}$ are uniquely determined from $a^{0,m}$.
%\end{itemize}
Then
\begin{align*}
a_{i,j}\utr a^{i,j}
&=((x\utr^{[i]}y)\utr^{[j]}(x\utr^{[i]}y))
\utr^{[1]}((y\otr^{[j]}x)\utr^{[i]}(y\otr^{[j]}x)) \\
&=((x\utr^{[i]}y)\utr^{[j]}(x\utr^{[i]}y))
\utr^{[1]}((y\utr^{[i]}y)\otr^{[j]}(x\utr^{[i]}y)) \\
&=((x\utr^{[i]}y)\utr^{[1]}(y\utr^{[i]}y))
\utr^{[j]}((x\utr^{[i]}y)\utr^{[1]}(y\utr^{[i]}y)) \\
&=(x\utr^{[i+1]}y)\utr^{[j]}(x\utr^{[i+1]}y)
=a_{i+1,j}, \\
a^{i,j}\otr a_{i,j}
&=((y\otr^{[j]}x)\otr^{[i]}(y\otr^{[j]}x))
\otr^{[1]}((x\utr^{[i]}y)\otr^{[j]}(x\utr^{[i]}y)) \\
&=((y\otr^{[j]}x)\otr^{[i]}(y\otr^{[j]}x))
\otr^{[1]}((x\otr^{[j]}x)\utr^{[i]}(y\otr^{[j]}x)) \\
&=((y\otr^{[j]}x)\otr^{[1]}(x\otr^{[j]}x))
\otr^{[i]}((y\otr^{[j]}x)\otr^{[1]}(x\otr^{[j]}x)) \\
&=(y\otr^{[j+1]}x)\otr^{[i]}(y\otr^{[j+1]}x)
=a^{i,j+1}.
\end{align*}
Since
$S(a_{i,j},a^{i,j})
=(a^{i,j}\otr a_{i,j},a_{i,j}\utr a^{i,j})
=(a^{i,j+1},a_{i+1,j})$,
we have
$S^{-1}(a^{i,j+1},a_{i+1,j})=(a_{i,j},a^{i,j})$.
%\begin{itemize}
%\item
Moreover,
$a_{n-1,0}$ is uniquely determined from $a_{n,0},\ldots,a_{n,m-1}$, since we have
\begin{align*}
S^{-1}(a^{n-1,m},a_{n,m-1})&=(a_{n-1,m-1},a^{n-1,m-1}), \\
S^{-1}(a^{n-1,m-1},a_{n,m-2})&=(a_{n-1,m-2},a^{n-1,m-2}), \\
S^{-1}(a^{n-1,m-2},a_{n,m-3})&=(a_{n-1,m-3},a^{n-1,m-3}),\ldots \\
S^{-1}(a^{n-1,1},a_{n,0})&=(a_{n-1,0},a^{n-1,0})
\end{align*}
%\item
and 
$a^{0,m-1}$ is uniquely determined from $a^{0,m},\ldots,a^{n-1,m}$, since we have
\begin{align*}
S^{-1}(a^{n-1,m},a_{n,m-1})&=(a_{n-1,m-1},a^{n-1,m-1}), \\
S^{-1}(a^{n-2,m},a_{n-1,m-1})&=(a_{n-2,m-1},a^{n-2,m-1}), \\
S^{-1}(a^{n-3,m},a_{n-2,m-1})&=(a_{n-3,m-1},a^{n-3,m-1}),\ldots \\
S^{-1}(a^{0,m},a_{1,m-1})&=(a_{0,m-1},a^{0,m-1}).
\end{align*}
%\end{itemize}
Repeating this, we see that $a_{0,0},a^{0,0}$ are uniquely determined from $a_{n,0},a^{0,m}$.
%\end{enumerate}
\end{proof}

\begin{definition}
Fix $n\geq0$.
We call $(X,\utr^{[n]},\otr^{[n]})$ the \textit{$n$-parallel biquandle} of 
$(X,\utr,\otr)$.
\end{definition}

\begin{example}
Consider the constant action biquandle $X=\mathbb{Z}_m$ with 
$x\otr y=x\utr y=x+1$. Then the $n$-parallel biquandle of $X$ is 
$\mathbb{Z}_m$ with $x\otr y=x\utr y=x+n$.
\end{example}

\begin{example}
More generally, if $\sigma:X\to X$ is a bijection then the $n$-parallel 
biquandle of the constant action biquandle $X$ with 
$x\utr y=x\otr y=\sigma(x)$
is $X$ with $x\utr^{[n]} y=x\otr^{[n]} y=\sigma^n(x)$.
\end{example}

\begin{proposition}
Let $X$ be an Alexander biquandle with operations
\[x\utr y=tx+(s-t)y\quad\mathrm{and}\quad x\otr y=sx.\]
Then the $n$-parallel biquandle of $X$ is the set $X$ with biquandle
operations
\[x\utr^{[n]} y=t^nx+(s^n-t^n)y\quad\mathrm{and}\quad x\otr^{[n]} y=s^nx.\]
\end{proposition}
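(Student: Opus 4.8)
The plan is a straightforward induction on $n$. Since the series of lemmas above already establishes that $(X,\utr^{[n]},\otr^{[n]})$ \emph{is} a biquandle, nothing about the axioms needs to be rechecked here; the only task is to identify the two operations explicitly, i.e.\ to solve the recursion in the definition of $\utr^{[n]}$ and $\otr^{[n]}$ in closed form in the linear setting. The base case $n=0$ is immediate: by definition $x\utr^{[0]}y=x=t^0x+(s^0-t^0)y$ and $x\otr^{[0]}y=x=s^0x$.

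For the inductive step, assume the formulas hold for $n-1$. The key preliminary observation is the collapse of the ``diagonal'' value: substituting into the inductive hypothesis gives $y\utr^{[n-1]}y=t^{n-1}y+(s^{n-1}-t^{n-1})y=s^{n-1}y$, and similarly $y\otr^{[n-1]}y=s^{n-1}y$, which is consistent with Lemma~\ref{lem:3.4}. Then
\[x\utr^{[n]}y=(x\utr^{[n-1]}y)\utr(y\utr^{[n-1]}y)=t\,(t^{n-1}x+(s^{n-1}-t^{n-1})y)+(s-t)\,s^{n-1}y,\]
and expanding and collecting the coefficient of $y$ — the $ts^{n-1}$ terms cancel — yields $t^nx+(s^n-t^n)y$, as claimed. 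The over-operation is even easier, since $\otr$ ignores its second slot:
\[x\otr^{[n]}y=(x\otr^{[n-1]}y)\otr(y\otr^{[n-1]}y)=s\cdot s^{n-1}x=s^nx.\]

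The only point requiring care is bookkeeping: one must apply the inductive hypothesis to \emph{both} arguments of the outer $\utr$ — the first argument becomes $x\utr^{[n-1]}y$ and the second becomes $y\utr^{[n-1]}y$ — and it is precisely the reduction of the second argument to $s^{n-1}y$ that makes the coefficients telescope to $s^n-t^n$. I do not expect any genuine obstacle; this is essentially a closed-form evaluation of a linear recurrence, and (if desired) one could alternatively derive it from Lemma~\ref{lem:3.3} by writing $\utr^{[n]}=\utr^{[1]}$ composed $n$ times along the appropriate diagonal.
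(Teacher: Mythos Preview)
Your proof is correct and follows essentially the same inductive approach as the paper. The only cosmetic differences are that you start the induction at $n=0$ rather than $n=1$, and you simplify $y\utr^{[n-1]}y$ to $s^{n-1}y$ before substituting, which makes the algebra a bit cleaner than the paper's version (which keeps it as $t^{n-1}y+(s^{n-1}-t^{n-1})y$ and cancels more terms at the end).
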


\begin{proof}
As a base case, if $n=1$ we have
\[x\utr^{[1]} y=x\utr y=t^1x+(s^1-t^1)y
\quad\mathrm{and}\quad x\otr^{[1]} y=x\otr y=s^1x.\]
%let us find the $2$-parallel biquandle of $X$.
%We compute
%\begin{eqnarray*}
%x\utr^{[2]} y & = & (x\utr y)\utr(y\utr y)\\
%& = &  t(tx+(s-t)y)+(s-t)(ty+(s-t)y)\\
%& = &  t^2x+[t(s-t)+(s-t)t+(s-t)(s-t)]y\\
%& = &  t^2x+(s-t)(t+t+s-t)y \\
%& = &  t^2x+(s^2-t^2)y 
%\end{eqnarray*}
%while $x\otr^{[2]} y=s^2x$. 

Now, suppose $x\utr^{[n-1]} y=t^{n-1}x+(s^{n-1}-t^{n-1}y)$; then we have
\begin{eqnarray*}
x\utr^{[n]} y & = & (x\utr^{[n-1]} y)\utr(y\utr^{[n-1]} y)\\
& = &  t(t^{n-1}x+(s^{n-1}-t^{n-1})y)+(s-t)(t^{n-1}y+(s^{n-1}-t^{n-1})y)\\
& = &  t^nx+[t(s^{n-1}-t^{n-1})+(s-t)t^{n-1}+(s-t)(s^{n-1}-t^{n-1})]y\\
& = &  t^nx+[ts^{n-1}-t^n+st^{n-1}-t^n+s^n-st^{n-1}-ts^{n-1}+t^n]y \\
& = &  t^nx+(s^n-t^n)y
\end{eqnarray*}
while $x\otr^{[n]} y=s^nx$ as required. 
\end{proof}

\section{$G$-Families of Biquandles}\label{GFB}

In this section we generalize a definition from \cite{IIJO} to the case of
biquandles.

\begin{definition}
Let $G$ be a group and $X$ a set. We say that
$(X,(\utr^g)_{g\in G},(\otr^g)_{g\in G})$ is a \textit{$G$-family of biquandles} if
\begin{itemize}
\item[(i)]
$a\utr^ga=a\otr^ga$
($\forall g\in G$, $\forall a\in X$)
\item[(ii)]
$\utr^ga:X\to X;x\mapsto x\utr^ga$ is a bijection ($\forall g\in G$, $\forall a\in X$)
\item[]
$\otr^ga:X\to X;x\mapsto x\otr^ga$ is a bijection ($\forall g\in G$, $\forall a\in X$)
\item[]
$S_{g,h}:X\times X\to X\times X;(x,y)\mapsto(y\otr^gx,x\utr^hy)$ is bijective ($\forall g,h\in G$)
\item[(iii)]
$(a\utr^gb)\utr^h(c\otr^gb)
=(a\utr^hc)\utr^{h^{-1}gh}(b\utr^hc)$
\item[]
$(a\otr^gb)\utr^h(c\otr^gb)
=(a\utr^hc)\otr^{h^{-1}gh}(b\utr^hc)$
\item[]
$(a\otr^gb)\otr^h(c\otr^gb)
=(a\otr^hc)\otr^{h^{-1}gh}(b\utr^hc)$
($\forall g,h\in G$, $\forall a,b,c\in X$)\quad and
\item[(iv)]
$a\utr^{gh}b=(a\utr^gb)\utr^h(b\utr^gb)$
\item[]
$a\otr^{gh}b=(a\otr^gb)\otr^h(b\otr^gb)$
($\forall g,h\in G$, $\forall a,b\in X$)
\end{itemize}
\end{definition}

%\begin{example}
%Every biquandle is a $\{1\}$-family of biquandles.
%\end{example}

\begin{definition}
Let $X$ be a biquandle. We define the \textit{idempotency index} and \textit{type} of $X$ by
\begin{itemize}
\item[]
$\operatorname{idem}X=\min\{n>0\,|\,a\utr^{[n]}a=a~(\forall a\in X)\}$\ and
\item[]
$\operatorname{type}X=\min\{n>0\,|\,a\utr^{[n]}b=a=a\otr^{[n]}b~(\forall a,b\in X)\}$.
\end{itemize}
\end{definition}

\begin{lemma}
Let $m\geq n\geq0$.
Let $X$ be a biquandle such that 
$\operatorname{idem}X,\operatorname{type}X<\infty$. Then we have
\begin{enumerate}
\item[(i)]
$\operatorname{idem}X\mid(m-n)$
$\Rightarrow$ $a\utr^{[m]}a=a\utr^{[n]}a$ and $a\otr^{[m]}a=a\otr^{[n]}a$ ($\forall a\in X$),
\item[(ii)]
$\operatorname{type}X\mid(m-n)$
$\Rightarrow$ $a\utr^{[m]}b=a\utr^{[n]}b$ and $a\otr^{[m]}b=a\otr^{[n]}b$ ($\forall a,b\in X$) and
\item[(iii)]
$\operatorname{idem}X\mid\operatorname{type}X$.
\end{enumerate}
\end{lemma}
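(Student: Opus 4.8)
The plan is to prove the three implications in order, using Lemma~\ref{lem:3.3} as the main engine and Lemma~\ref{lem:3.6}(ii) to ``cancel'' idempotent expressions.

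For (i), first I would check the special case $n=0$: if $\operatorname{idem}X\mid m$, write $m=q\cdot\operatorname{idem}X$ and iterate Lemma~\ref{lem:3.3} in the form $(a\utr^{[k]}a)\utr^{[\operatorname{idem}X]}(a\utr^{[k]}a)=a\utr^{[k+\operatorname{idem}X]}a$, noting that the left side equals $a\utr^{[k]}a$ once we know $b\utr^{[\operatorname{idem}X]}b=b$ for all $b$ (here $b=a\utr^{[k]}a$). Induction on $q$ then gives $a\utr^{[m]}a=a\utr^{[0]}a=a$, and the $\otr$ statement follows from Lemma~\ref{lem:3.4} (which gives $a\utr^{[k]}a=a\otr^{[k]}a$ for all $k$). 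For general $m\geq n$ with $\operatorname{idem}X\mid(m-n)$, I would apply Lemma~\ref{lem:3.3} with the roles $m\rightsquigarrow n$, $n\rightsquigarrow m-n$: $(a\utr^{[n]}a)\utr^{[m-n]}(a\utr^{[n]}a)=a\utr^{[m]}a$, and since $m-n$ is a multiple of $\operatorname{idem}X$, the case just proved (applied to the element $a\utr^{[n]}a$) collapses the left side to $a\utr^{[n]}a$.

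For (ii), the argument is the same with $\operatorname{type}$ in place of $\operatorname{idem}$: using $\operatorname{type}X\mid(m-n)$ and Lemma~\ref{lem:3.3}, $(a\utr^{[n]}b)\utr^{[m-n]}(b\utr^{[n]}b)=a\utr^{[m]}b$; but by definition of $\operatorname{type}$, $c\utr^{[m-n]}d=c$ for all $c,d$, so the left side is $a\utr^{[n]}b$. The $\otr$ case is identical.

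For (iii), I want to show $a\utr^{[\operatorname{type}X]}a=a$, which will force $\operatorname{idem}X\mid\operatorname{type}X$ by minimality of $\operatorname{idem}X$ (and the characterization that $\operatorname{idem}X\mid k$ iff $a\utr^{[k]}a=a$ for all $a$, which is exactly part (i) with $n=0$). But by definition of $\operatorname{type}X$ we have $a\utr^{[\operatorname{type}X]}b=a$ for \emph{all} $b$, in particular for $b=a$, so $a\utr^{[\operatorname{type}X]}a=a$ and we are done. The only subtlety here is the converse direction of part (i)'s characterization, i.e. that $a\utr^{[k]}a=a$ for all $a$ implies $\operatorname{idem}X\mid k$; this follows from the standard division-algorithm argument (write $k=q\cdot\operatorname{idem}X+r$ with $0\le r<\operatorname{idem}X$, apply (i) to get $a\utr^{[r]}a=a\utr^{[k]}a=a$, and conclude $r=0$ by minimality).

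I expect no real obstacle: the one point requiring care is making sure that when we invoke ``$b\utr^{[\operatorname{idem}X]}b=b$'' inside Lemma~\ref{lem:3.3} we are allowed to substitute the compound element $b=a\utr^{[n]}a$, which is fine since the idempotency and type conditions are universally quantified over $X$. The essential insight is simply that Lemma~\ref{lem:3.3} turns repeated parallel operations into additive index arithmetic, after which divisibility does the rest.
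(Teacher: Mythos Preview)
Your proposal is correct and follows essentially the same route as the paper: both arguments use Lemma~\ref{lem:3.3} to turn the iterated operations into index arithmetic and then strip off multiples of $\operatorname{idem}X$ (resp.\ $\operatorname{type}X$). The only differences are organizational. The paper peels off one factor of $\operatorname{idem}X$ at a time from the \emph{front}, writing
\[
a\utr^{[n+k\operatorname{idem}X]}a
=(a\utr^{[\operatorname{idem}X]}a)\utr^{[n+(k-1)\operatorname{idem}X]}(a\utr^{[\operatorname{idem}X]}a)
=a\utr^{[n+(k-1)\operatorname{idem}X]}a,
\]
whereas you first isolate the $n=0$ case and then apply it to the compound element $a\utr^{[n]}a$; both are equivalent. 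In (ii) your phrase ``by definition of $\operatorname{type}$, $c\utr^{[m-n]}d=c$'' is literally only the case $m-n=\operatorname{type}X$, but since you say the argument is the same as in (i), the intended induction on $(m-n)/\operatorname{type}X$ is clear. In (iii) you are actually more careful than the paper: the paper asserts the set equality $\{n\ge 0:a\utr^{[n]}a=a\ \forall a\}=(\operatorname{idem}X)\mathbb{Z}_{\ge 0}$ ``from (i)'', whereas you spell out the division-algorithm step needed for the reverse inclusion. Finally, you mention Lemma~\ref{lem:3.6}(ii) in your plan but never use it, and indeed it is not needed here.
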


\begin{proof}
\begin{enumerate}
\item[(i)] First, we compute 
\begin{align*}
a\otr^{[m]}a=a\utr^{[m]}a
&=a\utr^{[n+k\operatorname{idem}X]}a \\
&=(a\utr^{[\operatorname{idem}X]}a)
\utr^{[n+(k-1)\operatorname{idem}X]}
(a\utr^{[\operatorname{idem}X]}a) \\
&=a\utr^{[n+(k-1)\operatorname{idem}X]}a
=\cdots=a\utr^{[n]}a=a\otr^{[n]}a.
\end{align*}
\item[(ii)] Next, we have
\begin{align*}
a\utr^{[m]}b
&=a\utr^{[n+k\operatorname{type}X]}b \\
&=(a\utr^{[\operatorname{type}X]}b)
\utr^{[n+(k-1)\operatorname{type}X]}
(b\utr^{[\operatorname{type}X]}b) \\
&=a\utr^{[n+(k-1)\operatorname{type}X]}b
=\cdots=a\utr^{[n]}b.
\end{align*}
In the same way, we have
$a\otr^{[m]}b=a\otr^{[n]}b$.
\item[(iii)]
From (i), we have
\[ \{n\in\mathbb{Z}_{\geq0}\,|\,a\utr^{[n]}a=a~(\forall a\in X)\}
=(\operatorname{idem}X)\mathbb{Z}_{\geq0}. \]
It follows from $a\utr^{[\operatorname{type}X]}a=a$, that
$\operatorname{type}X\in(\operatorname{idem}X)\mathbb{Z}_{\geq0}$.
\end{enumerate}
\end{proof}

\begin{remark}
For $n\in\mathbb{Z}_{\operatorname{type}X}$ and
$a\utr^{[n]}b$, $a\otr^{[n]}b$ are well-defined.
\end{remark}

\begin{theorem}
Let $(X,\utr ,\otr )$ be a biquandle with $\operatorname{type}X<\infty$.
Set $G=\mathbb{Z}_{\operatorname{type}X}$.
Then $(X,(\utr^{[n]})_{n\in G},(\otr^{[n]})_{n\in G})$ is a 
$G$-family of biquandles called the \textit{$G$-family associated to $X$}.
\end{theorem}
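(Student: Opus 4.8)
The plan is to verify the four axioms of a $G$-family of biquandles for $(X,(\utr^{[n]})_{n\in G},(\otr^{[n]})_{n\in G})$ with $G=\mathbb{Z}_{\operatorname{type}X}$, drawing on the lemmas already proved for the operations $\utr^{[n]},\otr^{[n]}$. The one preliminary point is well-definedness: since $G=\mathbb{Z}_{\operatorname{type}X}$, each group element is a residue class mod $\operatorname{type}X$, and the preceding lemma (part (ii)) shows that $a\utr^{[m]}b=a\utr^{[n]}b$ and $a\otr^{[m]}b=a\otr^{[n]}b$ whenever $\operatorname{type}X\mid(m-n)$; so the operations $\utr^{[n]},\otr^{[n]}$ descend to well-defined maps indexed by $n\in G$. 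This is exactly the content of the remark just before the theorem, so I would cite it.

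With that in hand, each axiom matches a lemma. Axiom (i), $a\utr^ga=a\otr^ga$, is Lemma~\ref{lem:3.4}. Axiom (ii), the bijectivity of $\utr^{[n]}a$, $\otr^{[n]}a$, and of the map $S_{m,n}\colon(x,y)\mapsto(y\otr^{[m]}x,\,x\utr^{[n]}y)$, is precisely Lemma~\ref{lem:3.6}. Axiom (iv), the composition laws $a\utr^{gh}b=(a\utr^gb)\utr^h(b\utr^gb)$ and $a\otr^{gh}b=(a\otr^gb)\otr^h(b\otr^gb)$, is Lemma~\ref{lem:3.3} once we observe that in the abelian group $G=\mathbb{Z}_{\operatorname{type}X}$ the product $gh$ is the sum $g+h$; there is nothing further to check since Lemma~\ref{lem:3.3} was proved for all nonnegative integers $m,n$ and then passes to residues by well-definedness.

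The only axiom needing a small remark is (iii), the exchange laws, because the $G$-family version carries a conjugation: the right-hand exponents read $h^{-1}gh$. The plan is to note that $G=\mathbb{Z}_{\operatorname{type}X}$ is abelian, so $h^{-1}gh=g$, and then the three equations of axiom (iii) become exactly the three equations of Lemma~\ref{lem:3.5} with $m=g$, $n=h$. So no new computation is required; one simply invokes Lemma~\ref{lem:3.5} together with the abelian-ness of $G$.

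I expect the proof to be essentially a bookkeeping argument: the substantive work has already been done in Lemmas~\ref{lem:3.3}, \ref{lem:3.4}, \ref{lem:3.5}, and \ref{lem:3.6}, and the only genuinely new observations are (a) that well-definedness of the indexed operations follows from $\operatorname{type}X\mid(m-n)\Rightarrow$ equality, and (b) that the conjugation in axiom (iii) is trivial because $\mathbb{Z}_{\operatorname{type}X}$ is abelian. The mildest potential obstacle is making sure the index arithmetic is consistent — that $\utr^{[0]}=\operatorname{id}$ corresponds to the identity of $G$ and that sums of residues behave correctly under the composition law — but this is immediate from the definitions and Lemma~\ref{lem:3.3}. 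Hence I would present the proof as: recall well-definedness (the remark preceding the theorem), then check the four axioms by citing Lemmas~\ref{lem:3.4}, \ref{lem:3.6}, \ref{lem:3.5} (using $G$ abelian to kill the conjugation), and \ref{lem:3.3} (using $gh=g+h$ in $G$), respectively.
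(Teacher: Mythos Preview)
Your proposal is correct and takes essentially the same approach as the paper, which simply states that the theorem follows from Lemmas~\ref{lem:3.3}, \ref{lem:3.4}, \ref{lem:3.5}, and \ref{lem:3.6}. Your write-up is in fact more explicit than the paper's one-line proof, since you spell out which lemma handles which axiom and note both the well-definedness issue (handled by the remark preceding the theorem) and the triviality of the conjugation $h^{-1}gh$ in the abelian group $\mathbb{Z}_{\operatorname{type}X}$.
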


\begin{proof}
This follows from Lemmas \ref{lem:3.3}, \ref{lem:3.4}, \ref{lem:3.5} and \ref{lem:3.6}.
\end{proof}

\begin{proposition}\label{prop:4.5} Let $X$ be a finite biquandle. Then
%\begin{enumerate}
%\item
\[\operatorname{idem}X<\infty\quad \mathrm{and}\quad
%\item
\operatorname{type}X<\infty.\]
%\end{enumerate}
\end{proposition}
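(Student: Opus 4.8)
The plan is to show both indices are finite by exhibiting a single positive integer $N$ with $a\utr^{[N]}b=a=a\otr^{[N]}b$ for all $a,b\in X$; then $\operatorname{type}X\le N<\infty$, and by part (iii) of the preceding Lemma (or directly) $\operatorname{idem}X<\infty$ as well. Since $X$ is finite, a natural candidate is to track the behaviour of the ``parallel'' operations under iteration and invoke a pigeonhole/eventual-periodicity argument, but the subtlety is that periodicity gives $a\utr^{[m]}b=a\utr^{[n]}b$ for some $m>n\ge 1$, not immediately that the common value is $a$ itself. So the real work is to force the period to start at $0$.

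First I would consider, for each fixed $b\in X$, the map $\beta_b\colon X\to X$, $\beta_b(x)=x\utr b$. By axiom (ii) this is a bijection of the finite set $X$, hence $\beta_b$ has finite order; let $r_b$ be that order and $r=\operatorname{lcm}_{b\in X} r_b$, so $\beta_b^{\,r}=\mathrm{id}$ for every $b$. The key observation is that $a\utr^{[n]}b$ is \emph{not} simply $\beta_b^{\,n}(a)$ in general (that identification only holds in the quandle case, per the Remark), because the second input to the outer $\utr$ at each stage is $b\utr^{[k]}b$, which itself moves. So the second step is to understand the orbit of $b$ under $k\mapsto b\utr^{[k]}b$: again by finiteness this sequence is eventually periodic, and I would like to show it is genuinely periodic from $k=0$. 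Here I can use Lemma \ref{lem:3.6}(ii): the equivalence $a\utr^{[n]}a=b\utr^{[n]}b\Leftrightarrow a=b$ shows the map $a\mapsto a\utr^{[n]}a$ is injective, hence a bijection of $X$ for each $n$; combined with Lemma \ref{lem:3.3} ($ (b\utr^{[1]}b)\utr^{[n-1]}(b\utr^{[1]}b) = b\utr^{[n]}b$, i.e. the sequence $c_k := b\utr^{[k]}b$ satisfies $c_{k} = \phi(c_{k-1})$ for the bijection $\phi(x)=x\utr x$) this exhibits $(c_k)_{k\ge 0}$ as a forward orbit of the bijection $\phi$, hence periodic from $k=0$ with some period $p$ dividing $\operatorname{ord}(\phi)$. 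Taking $N$ to be a common multiple of $r$ and of this $p$ (over all $b$), I would then prove by induction on $k$ that $a\utr^{[kN]}b=a$: the inductive step uses Lemma \ref{lem:3.3} to write $a\utr^{[(k+1)N]}b=(a\utr^{[N]}b)\utr^{[kN]}(b\utr^{[N]}b)=(a\utr^{[N]}b)\utr^{[kN]}b$ since $c_N=c_0=b$, reducing to the case $k=1$, which in turn follows by peeling off one application of $\utr$ at a time, each with second argument $c_j=b$ when $N$ is a multiple of $p$, so the $N$-fold composite is $\beta_b^{\,N}=\mathrm{id}$.

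I would run the identical argument for $\otr$ (using $\alpha_b(x)=x\otr b$ and the sequence $b\otr^{[k]}b$, which by Lemma \ref{lem:3.4} equals $c_k$ anyway), arriving at the \emph{same} $N$ or a common multiple of the two, giving $a\otr^{[N]}b=a$ for all $a,b$. Hence $\operatorname{type}X\le N<\infty$, and since $a\utr^{[N]}a=a$ also witnesses $\operatorname{idem}X\le N<\infty$ (or apply Lemma~1.14(iii)), both conclusions follow.

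The main obstacle I anticipate is precisely the point flagged above: establishing that the auxiliary sequence $c_k=b\utr^{[k]}b$ is periodic starting from index $0$ rather than merely eventually periodic — without this the reduction ``second argument is always $b$'' breaks and one only gets $a\utr^{[m]}b=a\utr^{[n]}b$ for $m>n\ge1$, which does not yield $\operatorname{type}X<\infty$. The resolution is to recognize $c_k$ as a genuine forward orbit of the bijection $x\mapsto x\utr x$ (justified by Lemmas \ref{lem:3.3} and \ref{lem:3.6}(ii)); once that is in hand, the remaining steps are routine pigeonhole and induction.
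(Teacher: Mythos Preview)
Your treatment of $\operatorname{idem}X$ is correct and is essentially the paper's argument recast in orbit language: recognizing $c_k=b\utr^{[k]}b$ as the forward orbit of $b$ under the bijection $\phi(x)=x\utr x$ (injective by Lemma~\ref{lem:3.6}(ii), hence bijective on the finite set $X$) is exactly the content of the paper's computation $(a\utr^{[m-n]}a)\utr^{[n]}(a\utr^{[m-n]}a)=a\utr^{[n]}a\Rightarrow a\utr^{[m-n]}a=a$.

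The argument for $\operatorname{type}X$, however, contains a genuine error in the base case $k=1$. You claim that ``peeling off one application of $\utr$ at a time, each with second argument $c_j=b$ when $N$ is a multiple of $p$'' gives $a\utr^{[N]}b=\beta_b^{\,N}(a)$. Unwinding the definition actually gives
\[
a\utr^{[N]}b=\beta_{c_{N-1}}\circ\beta_{c_{N-2}}\circ\cdots\circ\beta_{c_0}(a),
\]
and the $c_j$ are \emph{not} all equal to $b$; they cycle through the $\phi$-orbit of $b$ with period $p$, so $c_j=b$ only when $p\mid j$. The composite is therefore not $\beta_b^{\,N}$, and knowing that each factor $\beta_{c_j}$ has order dividing your $r$ does not control the order of the product (two transpositions can compose to a $3$-cycle). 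Hence choosing $N$ a common multiple of $r$ and $p$ is insufficient.

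The fix --- and this is exactly what the paper does --- is to work with the correct bijection. Once $b\utr^{[p]}b=b$, Lemma~\ref{lem:3.3} gives $a\utr^{[kp]}b=\bigl(\utr^{[p]}b\bigr)^k(a)$, where $\utr^{[p]}b$ is the single map $x\mapsto x\utr^{[p]}b$. This is a bijection of $X$ by Lemma~\ref{lem:3.6}(i) and so has some finite order $q_b$; then $a\utr^{[q_bp]}b=a$ for all $a$. The paper writes $n_b$ for your $p$ and $\underline{n}_b$ for this $q_b$, obtaining $\operatorname{type}X\le\operatorname{lcm}\{\underline{n}_bn_b,\overline{n}_bn_b\mid b\in X\}$. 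Your quantity $r$ plays no role.
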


\begin{proof}
%\begin{enumerate}
%\item 
First, set
$n_a=\min\{n>0\,|\,a\utr^{[n]}a=a\}$. Then since 
$|\{a\utr^{[n]}a\,|\,n\in\mathbb{Z}_{\geq0}\}|<\infty$,
$\exists m,n>0$ ($m>n$) such that $a\utr^{[m]}a=a\utr^{[n]}a$.
Since
\[ (a\utr^{[m-n]}a)\utr^{[n]}(a\utr^{[m-n]}a)
=a\utr^{[m]}a=a\utr^{[n]}a, \]
we have $a\utr^{[m-n]}a=a$ and therefore $n_a\leq m-n<\infty$.
Since
\[ a\utr^{[kn_a]}a
=(a\utr^{[n_a]}a)\utr^{[(k-1)n_a]}(a\utr^{[n_a]}a)
=a\utr^{[(k-1)n_a]}a
=\cdots=a, \]
we have
$\operatorname{idem}X\leq\operatorname{lcm}(\{n_a\,|\,a\in X\})<\infty$
as required.

Next, set
$\underline{n}_b=\min\{n>0\,|\,(\utr^{[n_b]}b)^n=\mathrm{id}_X\}$ and
$\overline{n}_b=\min\{n>0\,|\,(\otr^{[n_b]}b)^n=\mathrm{id}_X\}$.
Since $X$ is finite, we have $\underline{n}_b,\overline{n}_b<\infty$.
Then
\begin{align*}
&a\utr^{[k\underline{n}_bn_b]}b
=(a\utr^{[n_b]}b)\utr^{[(k\underline{n}_b-1)n_b]}(b\utr^{[n_b]}b) \\
&=(a\utr^{[n_b]}b)\utr^{[(k\underline{n}_b-1)n_b]}b
=\cdots
=a\underbrace{\utr^{[n_b]}b\cdots\utr^{[n_b]}b}_{k\underline{n}_b}
=a.
\end{align*}
In the same way, we see $a\otr^{[k\overline{n}_bn_b]}b=a$.
Therefore
$\operatorname{type}X
\leq\operatorname{lcm}(\{\underline{n}_bn_b,\overline{n}_bn_b\,|\,b\in X\})
<\infty$ as required.
%\end{enumerate}
\end{proof}

\begin{example}\label{ex:gfam}
Let $X=\mathbb{Z}_5$ and set $t=2$ and $s=3$. Then we have Alexander biquandle
operations
\[\begin{array}{rclrcl}
x\utr^{[1]} y & = & 2x+(3-2)y = 2x+y, & x\otr^{[1]} y & = & 3x \\
x\utr^{[2]} y & = & 4x+(9-4)y =4x, & x\otr^{[2]} y & = & 9x=4x \\
x\utr^{[3]} y & = & 3x+(27-8)y =3x+4y, & x\otr^{[3]} y & = & 12x=2x \\
x\utr^{[4]} y & = &  x+(81-16)y =x, & x\otr^{[4]} y & = & 6x=x \\
\end{array}\]
so $(X,\utr,\otr)$ has type $4$; thus we have a $\mathbb{Z}_4$-family of
biquandles associated to $X$.
\end{example}

\begin{example}\label{ex:Z2}
Let $X=\mathbb{Z}_2$ with operations $x\utr y=x\otr y=x+1$. Then we have 
\[\begin{array}{rclrcl}
x\utr^{[1]} y & = & x+1  & x\otr^{[1]} y & = & x+1 \\
x\utr^{[2]} y & = & (x+1)+1=x, & x\otr^{[2]} y & = & (x+1)+1=x 
\end{array}\]
so $(X,\utr,\otr)$ has type $2$; thus we have a $\mathbb{Z}_2$-family of
biquandles associated to $X$.
\end{example}

\section{Partially Multiplicative Biquandles}\label{MDB}

In this section we will extend the idea of biquandle colorings to 
$S^1$-oriented handlebody-knots represented by $Y$-oriented spatial 
trivalent graph diagrams by adding a new operation at vertices.
Recall that the Reidemeister moves for handlebody-knots are given by
\[\includegraphics{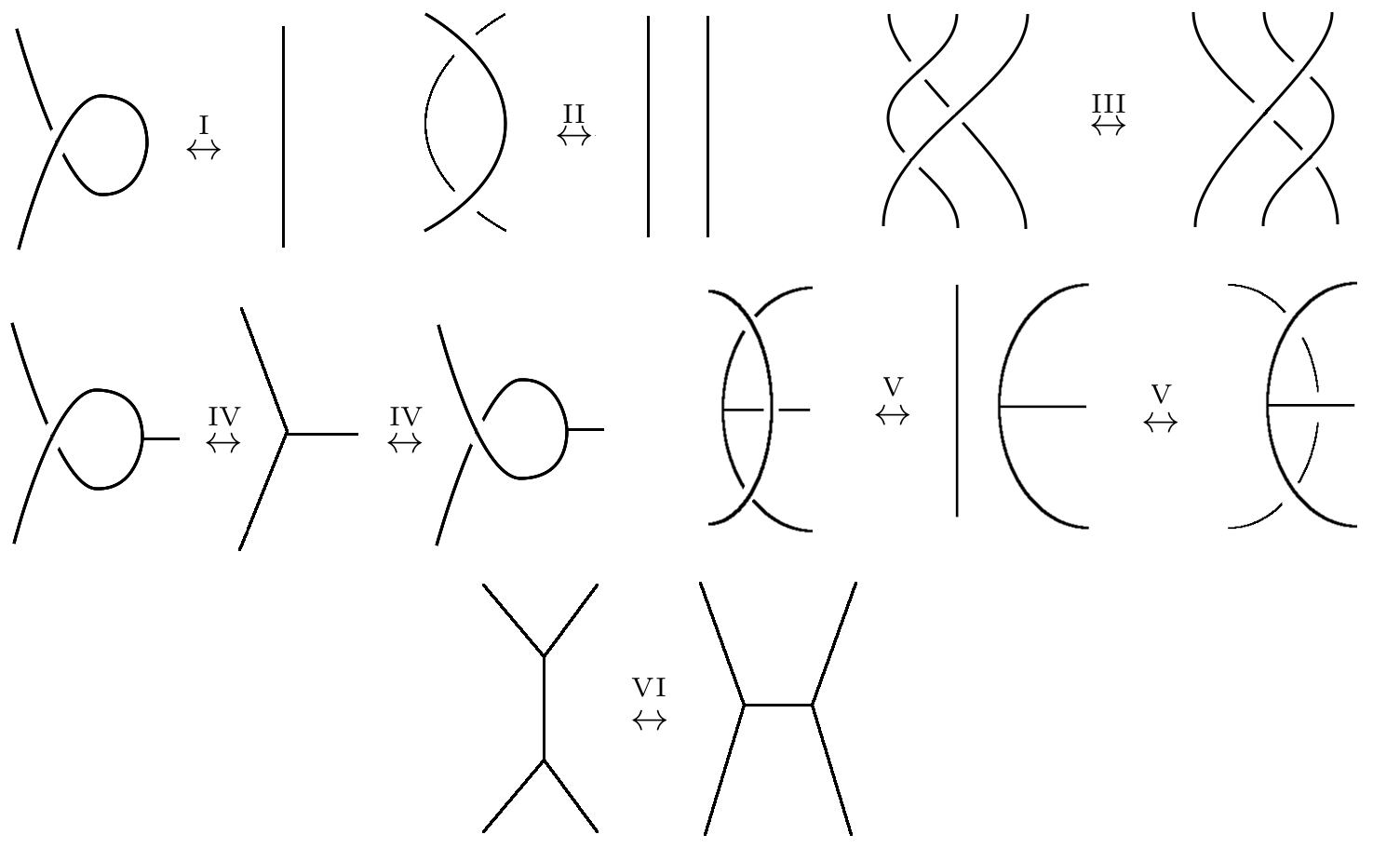}\]
and that spatial trivalent graph diagrams represent ambient isotopic 
spatial trivalent graphs if they are related by moves I, II, III, IV and V, 
while including the diagrammatic move VI (corresponding to the spatial IH move)
yields handlebody-knots. See \cite{I3,IIJO} for more.
A \textit{$Y$-orientation} of a spatial trivalent graph diagram
is a choice of direction for each edge in the underlying spatial graph 
such that no vertex is a source or a sink. An $S^1$-orientation of a 
handlebody-knot corresponds to a $Y$-orientation of a representative
spatial trivalent graph; see \cite{I3} for more.

\begin{definition} \label{def:PMB}
Let $(X,\utr ,\otr )$ be a biquandle, $D\subset X\times X$ a subset of the
Cartesian product of $X$ with itself, and $\cdot$ a map from $D$ to $X$
called a \textit{partial multiplication}. Then we say that
$(X,\utr ,\otr ,\cdot:D\to X;(a,b)\mapsto ab)$ is a \textit{partially 
multiplicative biquandle} if
\begin{itemize}
\item[(i)]
$x\mapsto ax$, $x\mapsto xb$ are injective,
\item[(ii)]
$(a,b\utr a)\in D\Leftrightarrow(b,a\otr b)\in D
\Rightarrow a(b\utr a)=b(a\otr b)$,
\item[(iii)]
$(a,b)\in D
\Leftrightarrow(a\utr x,b\utr (x\otr a))\in D
\Leftrightarrow(a\otr x,b\otr (x\utr a))\in D
\Rightarrow$
\begin{align*}
x\utr (ab)&=(x\utr a)\utr b, &
(ab)\utr x&=(a\utr x)(b\utr (x\otr a)), \\
x\otr (ab)&=(x\otr a)\otr b, &
(ab)\otr x&=(a\otr x)(b\otr (x\utr a)),
\end{align*}
\item[(iv)]
$(a,b),(ab,c)\in D\Leftrightarrow(b,c),(a,bc)\in D
\Rightarrow(ab)c=a(bc)$ and
\item[(v)]
$(a,b),(c,d)\in D$, $ab=cd$
$\Leftrightarrow$ $\exists e\in X$ such that $(a,e),(e,d)\in D$, $ae=c$, $ed=b$
\end{itemize}
If $D=\bigcup_{\lambda\in \Lambda} G_{\lambda}\times G_{\lambda}$ for a family of groups
$\{G_{\lambda}\ |\ \lambda\in \Lambda\}$ with group operation $(a,b)\mapsto ab$, 
then we say $(X,\utr,\otr,\cdot)$ is a \textit{group decomposable biquandle}.
\end{definition}

Partially multiplicative biquandles can be used to extend biquandle colorings
to handlebody-knots represented by spatial trivalent graphs
with \textit{$Y$-orientations}, i.e. directed
trivalent graphs in $\mathbb{R}^3$ without sources or sinks. Given such
a diagram $\Gamma$ and a partially multiplicative 
biquandle $X$, an assignment of elements of $X$ to the semiarcs of $\Gamma$
is an \textit{$X$-coloring} if at each crossing and vertex we have the
following:
\[\includegraphics{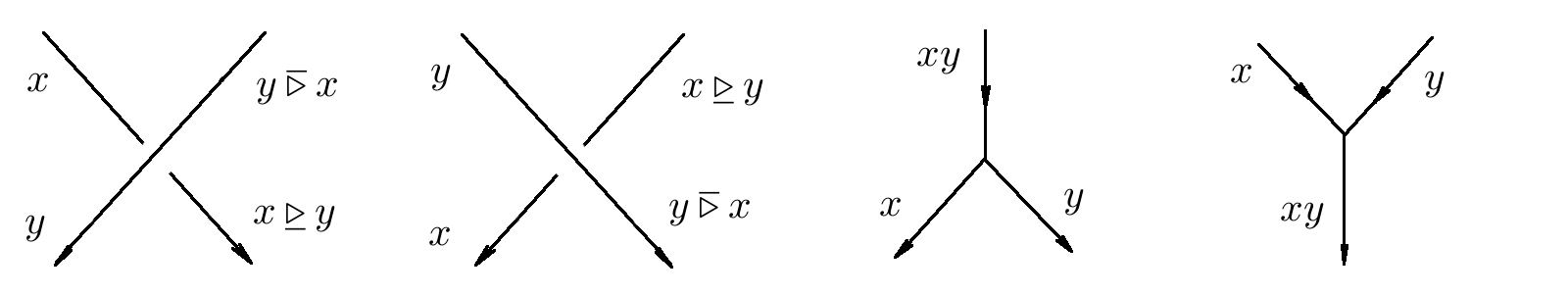}\]

We then have:

\begin{proposition}
Let $\Gamma$ be a $Y$-oriented spatial trivalent graph diagram with an 
$X$-coloring
by a partially multiplicative biquandle $X$. Then for any diagram $\Gamma'$
obtained from $\Gamma$ by a  handlebody-knot Reidemeister move, there is a unique $X$-coloring 
of $\Gamma'$ agreeing with the coloring on $\Gamma$ outside the neighborhood 
of the move.
\end{proposition}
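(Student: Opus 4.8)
The plan is to proceed exactly as one proves the analogous statement for ordinary biquandles (Theorem in Section \ref{B}): work move by move, and for each handlebody-knot Reidemeister move check that a coloring of the semiarcs on one side of the move extends uniquely to a coloring on the other side, with the colors on the boundary semiarcs fixed. For the moves I, II, III that involve only crossings and no vertices, the verification is identical to the classical biquandle case and uses only the biquandle axioms (i), (ii), (iii): axiom (ii)'s bijectivity of $\alpha_y,\beta_y,S$ gives existence and uniqueness of the extension for moves I and II, and axiom (iii) (the exchange laws) together with (ii) handles move III. I would state that these cases are already covered by the cited classical result and give no details.

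The new content is in the moves that involve a vertex: move IV (pulling a strand over/under a vertex, which is the handlebody analogue of Reidemeister II across a vertex), move V (the handlebody analogue of Reidemeister III with one strand replaced by the two edges meeting at a vertex, i.e. sliding a crossing past a vertex), and move VI (the IH-move relating two different trivalent spanning structures of the same handlebody). First I would fix notation at a vertex: the three semiarcs incident to a $Y$-oriented vertex consist of two inputs labelled $a,b$ (in the cyclic order determined by the diagram) and one output, and the coloring rule at the vertex reads off the figure as: the output semiarc is colored $ab$, so that $(a,b)\in D$ is required for the coloring to be defined. With this in hand, each move translates into an algebraic identity among the partial multiplication, $\utr$, and $\otr$. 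For move IV (strand passing a vertex), pushing a strand colored $x$ past the vertex forces on one side the colors obtained by first interacting $x$ with the output $ab$ and on the other side by interacting $x$ successively with $a$ and then $b$; the equalities $x\utr(ab)=(x\utr a)\utr b$, $x\otr(ab)=(x\otr a)\otr b$, $(ab)\utr x=(a\utr x)(b\utr(x\otr a))$, $(ab)\otr x=(a\otr x)(b\otr(x\utr a))$ of axiom (iii), together with the "$\in D \Leftrightarrow \in D$" clause guaranteeing the partial products on both sides are defined, are exactly what is needed; uniqueness of the extension follows from the injectivity in axiom (i) plus the bijectivity in biquandle axiom (ii). Move V (the trivalent version of Reidemeister III) reduces, after resolving, to the biquandle exchange laws applied to the two branches plus one application of axiom (iii) to move the crossing past the vertex, so it follows by combining Lemma-type computations with (iii). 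Move VI (the IH-move) is where the associativity-type axioms enter: the two sides of an IH-move present the same four outer semiarcs but with an intermediate semiarc colored by either $(ab)c$ read one way or $a(bc)$ read the other way (after possibly reassociating via the edge that gets contracted and re-expanded), so that the required compatibility is precisely axiom (iv) $(ab)c=a(bc)$, with the biconditional clause $(a,b),(ab,c)\in D\Leftrightarrow(b,c),(a,bc)\in D$ ensuring both presentations are colorable, and axiom (v) supplying the existence of the intermediate color $e$ when the IH-move is performed in the other direction (i.e., when the "H" side presents $ab=cd$ and one must produce the "I" side with middle color $e$). Axiom (ii) (the "$a(b\utr a)=b(a\otr b)$" clause) is used to handle the case of move VI where the two edges meeting at the contracted vertex are crossed or swapped in cyclic order, ensuring the vertex relation is symmetric in the appropriate sense.

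The main obstacle I expect is purely bookkeeping rather than conceptual: getting the orientations and cyclic orders at each vertex right in every variant of moves IV, V, VI, since each move has several sub-cases according to the $Y$-orientation of the strands (which edges are inputs vs. outputs) and the over/under information, and one must check that the particular combination of axioms (i)--(v) invoked matches each sub-case. In particular one should verify that Definition \ref{def:PMB} was set up with enough biconditionals ("$\in D$" on one side iff on the other) so that in \emph{every} sub-case the partial products appearing on both sides of the move are simultaneously defined — this is the subtle point, and I would organize the proof so that for each move I first check colorability (membership in $D$) on both sides using clause (iii) or (v), then check the equality of colors using the corresponding "$\Rightarrow$" conclusion, then invoke (i) and biquandle (ii) for uniqueness. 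I would present one representative sub-case of move IV and one of move VI in full detail and assert the rest are analogous, as is standard for results of this type.
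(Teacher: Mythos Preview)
Your overall strategy---verify each handlebody-knot Reidemeister move and match it to the corresponding clause of Definition~\ref{def:PMB}---is exactly what the paper does, and your treatment of moves I--III and IV is correct.

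The slip is in your identification of move~V and, consequently, in where you place axiom~(ii). What you describe as move~V (``sliding a crossing past a vertex,'' the trivalent analogue of Reidemeister~III) is in fact the same as move~IV; you have essentially described move~IV twice. The actual move~V is the \emph{vertex twist}: two edges incident to a common vertex may have a crossing introduced or removed between them. This is precisely what axiom~(ii) encodes: with inputs $a$ (over) and $b$ (under) crossing just before the vertex, the merged output reads $a(b\utr a)$ from one side and $b(a\otr b)$ from the other, and the biconditional on $D$-membership ensures both are defined. So axiom~(ii) belongs to move~V, not to move~VI. Move~VI (the IH-move) involves no crossings at all; it is handled entirely by axioms~(iv) and~(v), corresponding to the two distinct re-expansions of the contracted edge (associativity for one, the ``common refinement'' $ab=cd\Leftrightarrow\exists e$ clause for the other). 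Once you correct this matching, your plan---present one oriented sub-case of each new move in detail and assert the rest are analogous---is exactly the paper's proof.
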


\begin{proof}
This is a matter of checking the Reidemeister moves for $Y$-oriented spatial
trivalent graphs representing
handlebody-knots and comparing the axioms in definition \ref{def:PMB}. 
Invariance under moves I, II and III is well-known; see \cite{EN}, for 
instance. For each of the remaining moves, we illustrate with a choice of
Y-orientation; the other cases are similar.
\[\includegraphics{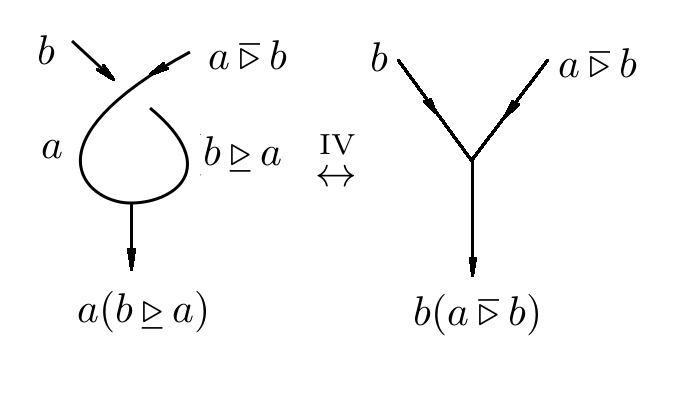}\ \raisebox{0.2in}{
\includegraphics{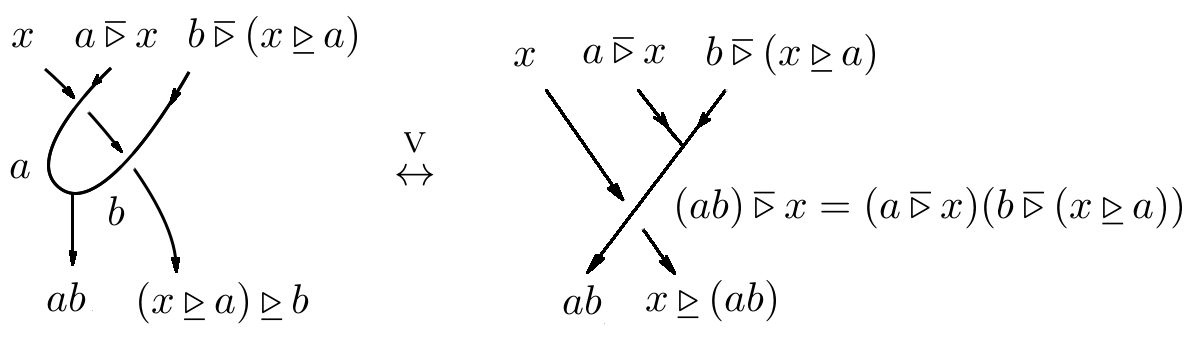}}\]
\[\includegraphics{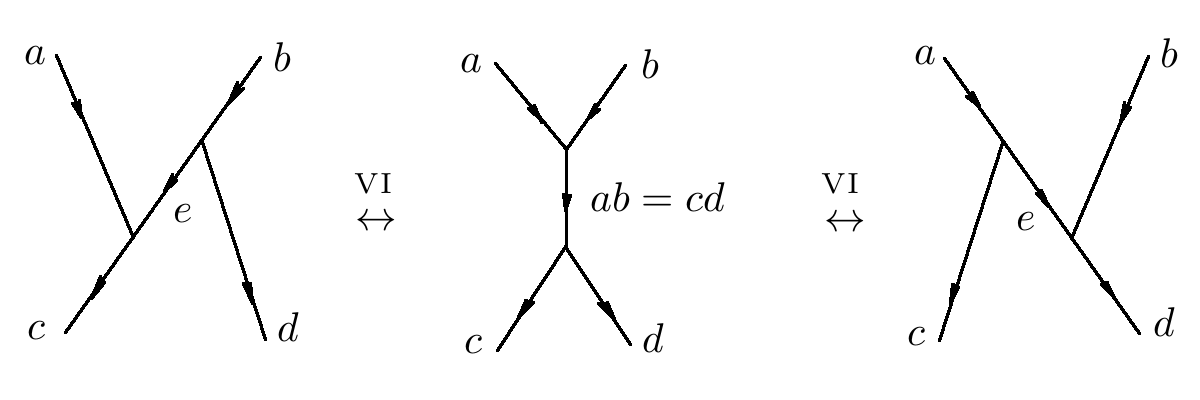}\quad \quad\includegraphics{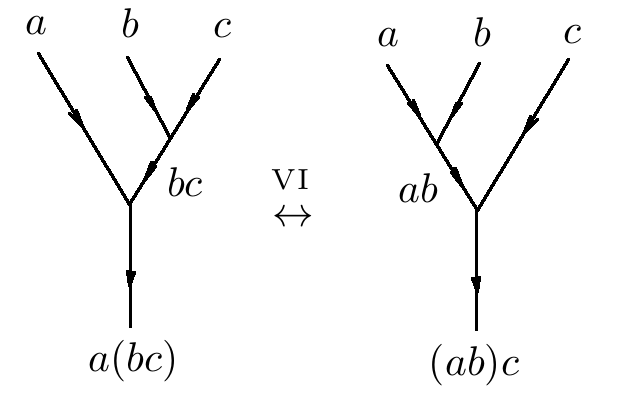}\]
\end{proof}

\begin{corollary}
The number of $X$-colorings of a $Y$-oriented spatial trivalent graph diagram
representing an $S^1$-oriented handlebody-knot by a partially multiplicative 
biquandle $X$ is invariant under the handlebody-knot Reidemeister moves.
\end{corollary}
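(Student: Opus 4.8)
The plan is to upgrade the preceding proposition from a statement about \emph{uniqueness of an extended coloring} to a statement about a \emph{bijection of coloring sets}, and then to combine this with the fact, recorded in \cite{I3,IIJO}, that any two $Y$-oriented spatial trivalent graph diagrams representing the same $S^1$-oriented handlebody-knot are connected by a finite sequence of handlebody-knot Reidemeister moves (moves I--VI) together with planar isotopy. Once each individual move is known to induce a bijection on $X$-colorings, invariance of the number of colorings follows by composing these bijections along such a sequence.

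First I would fix a single handlebody-knot Reidemeister move $\Omega$ carrying a diagram $\Gamma$ to a diagram $\Gamma'$, where $\Gamma$ and $\Gamma'$ coincide outside a disk $B$ in which the move is performed. By the proposition, every $X$-coloring $f$ of $\Gamma$ extends to a unique $X$-coloring $\Omega_*(f)$ of $\Gamma'$ agreeing with $f$ on all semiarcs lying outside $B$; this defines a map $\Omega_*\colon\mathcal{C}_X(\Gamma)\to\mathcal{C}_X(\Gamma')$. The inverse move $\Omega^{-1}$ is again a handlebody-knot Reidemeister move, so the same proposition produces a map $(\Omega^{-1})_*\colon\mathcal{C}_X(\Gamma')\to\mathcal{C}_X(\Gamma)$. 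Next I would verify that these are mutually inverse: for $f\in\mathcal{C}_X(\Gamma)$, both $f$ and $(\Omega^{-1})_*(\Omega_*(f))$ are $X$-colorings of $\Gamma$ that agree with $\Omega_*(f)$ outside $B$, hence agree with each other outside $B$, so by the uniqueness clause of the proposition applied to $\Omega^{-1}$ they are equal; the symmetric argument gives $\Omega_*\circ(\Omega^{-1})_*=\mathrm{id}$. Therefore $\Omega_*$ is a bijection and $|\mathcal{C}_X(\Gamma)|=|\mathcal{C}_X(\Gamma')|$. Composing such bijections along a sequence of moves relating two diagrams $D,D'$ of the same $S^1$-oriented handlebody-knot then yields $|\mathcal{C}_X(D)|=|\mathcal{C}_X(D')|$, which is the claim.

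I do not expect a genuine obstacle here: the substantive work---matching the axioms (i)--(v) of Definition \ref{def:PMB} against moves IV, V and VI, and the classical biquandle checks against moves I--III---has already been carried out in the proof of the proposition. The only points requiring mild care are that the ``agrees outside the neighborhood of the move'' condition must pin the extended coloring down in \emph{both} directions (which is exactly the uniqueness guaranteed by the proposition, so nothing new is needed) and that moves I--VI really do generate the equivalence relation on diagrams (the cited classification). Finally, since $X$ is not assumed finite, the corollary as stated is an equality of cardinalities; when $|X|<\infty$ it specializes to the integer-valued $X$-counting invariant of the $S^1$-oriented handlebody-knot.
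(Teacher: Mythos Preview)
Your argument is correct and is exactly the standard passage from ``unique extension across a move'' to ``bijection of coloring sets'' that the paper has in mind; the paper simply records the statement as an immediate corollary of the proposition without writing out the bijection explicitly. You have spelled out what the paper leaves tacit, and nothing in your write-up deviates from that intended route.
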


%\begin{example}
%Every biquandle becomes a partially multiplicative biquandle by setting 
%$D=\emptyset$; we call this the \textit{trivial} partial multiplication
%on $X$.
%\end{example}

%\begin{example}
%Any biquandle in which $x\utr x=x\otr x=x$ for all $x\in X$ is a partially
%multiplicative biquandle with $D=\{(x,x)\ |\ x\in X\}$.
%\end{example}

We've already seen our main example of partially multiplicative biquandles:
$G$-families of biquandles.

\begin{proposition}
Let $(X,\utr^g,\otr^g)$ be a $G$-family of biquandles.
Set 
\[Q=X\times G\quad\mathrm{and}\quad 
D=\{((a,g),(a\utr^ga,h))\,|\,a\in X,g,h\in G\}\]
and define
\begin{align*}
(a,g)\utr (b,h)&=(a\utr^hb,h^{-1}gh),\\
(a,g)\otr (b,h)&=(a\otr^hb,g) \ \mathrm{and}\\
(a,g)\cdot(a\utr^ga,h)&=(a,gh).
\end{align*}
Then, $(Q,\utr ,\otr ,\cdot)$ is a partially multiplicative biquandle.
\end{proposition}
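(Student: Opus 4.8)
The plan is to verify, one at a time, each of the five axioms (i)--(v) in Definition~\ref{def:PMB} for the candidate structure $(Q,\utr,\otr,\cdot)$, using the $G$-family axioms (i)--(iv) throughout. Before touching the axioms I would first record the easy preliminary facts: that $(Q,\utr,\otr)$ is a genuine biquandle. Axiom (i) of the biquandle for $Q$ is $(a,g)\utr(a,g)=(a\utr^g a,g)=(a\otr^g a,g)=(a,g)\otr(a,g)$, using the $G$-family axiom (i). Bijectivity of $\alpha_{(b,h)}$, $\beta_{(b,h)}$, and $S$ on $Q$ follows coordinatewise from the corresponding $G$-family bijectivity statements for $\utr^h b$, $\otr^h b$, and $S_{h,h}$, together with the fact that $g\mapsto h^{-1}gh$ is a bijection of $G$; the exchange laws on $Q$ reduce to the $G$-family exchange laws (iii) (with the conjugated subscript $h^{-1}gh$ playing exactly the role it plays there) once one checks the $G$-component bookkeeping, which is a routine conjugation-algebra computation. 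This establishes the ambient biquandle.

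Next I would turn to the partial-multiplication axioms. For (i), injectivity of $x\mapsto (a,g)x$ and $x\mapsto x(b,h)$: the left multiplication $(a,g)\cdot(-)$ is only defined on the fiber $\{(a\utr^g a,h)\mid h\in G\}$ and sends $(a\utr^g a,h)\mapsto(a,gh)$, which is injective in $h$; right multiplication by $(b,h)=(a\utr^g a,h)$ sends $(a,g)\mapsto(a,gh)$, and here one needs that the pair $(a,g)$ is recoverable---it is, because $gh$ determines $g$ and then $a$ is forced by the constraint $b=a\utr^g a$ together with the bijectivity of $\utr^g a$ (this is where Lemma~\ref{lem:3.6}-style injectivity, restated in the $G$-family, is used). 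For (ii), one computes that $((a,g),(b,h)\utr(a,g))\in D$ forces $b\utr^g a = (a\utr^g a)\utr^{g^{-1}hg}(a\utr^g a)$, symmetrically for the other side, and then that both products equal $(a, g\cdot g^{-1}hg) = (a,hg)$ versus $(b, h\cdot h^{-1}gh)=(b,gh)$---so the real content is showing the two membership conditions are equivalent and that when they hold the resulting elements of $X\times G$ coincide; I expect this to unwind via $G$-family axiom (iv) (the "$a\utr^{gh}b=(a\utr^g b)\utr^h(b\utr^g b)$" relation) applied to rewrite $a\utr^g a$ raised to a power. Axiom (iii) is the longest: the four "distribute a product through a crossing" identities. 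The two underline identities, e.g. $(a,g)\utr\big((b,h)(c,k)\big)=\big((a,g)\utr(b,h)\big)\utr\big((c,k)\otr((b,h)\utr(a,g))\big)$ with $(b,h)=(c\utr^c c,\cdot)$ wait---more precisely with the product $(b,\ell)(c,\ell k)=(b,\ell\cdot\ell k)$ wait I mean using $D$-membership $b=c\utr^{\ell} c$; here the key algebraic input is again $G$-family (iv) plus (iii), and the $G$-component reduces to the identity $(\ell k)^{-1}g(\ell k) = k^{-1}(\ell^{-1}g\ell)k$.

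For axiom (iv), associativity $(ab)c = a(bc)$ on the appropriate domain: writing $a=(a_0,g)$, the only composable triples have $b=(a_0\utr^g a_0, h)$ and then $ab=(a_0,gh)$, so $c=(a_0\utr^{gh}a_0, k)$, and one must check $a_0\utr^{gh}a_0 = (a_0\utr^g a_0)\utr^h(a_0\utr^g a_0)$---which is exactly $G$-family axiom (iv) applied to $a=b=a_0$---to see that the $(b,c)$-composability condition is equivalent; then $(ab)c=(a_0,(gh)k)=(a_0,g(hk))=a(bc)$ by associativity in $G$. Axiom (v), the cancellation/refinement condition, is where I expect the main obstacle: given $(a,b),(c,d)\in D$ with $ab=cd$ in $Q$, one must produce $e\in Q$ with $(a,e),(e,d)\in D$, $ae=c$, $ed=b$. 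Writing $a=(a_0,g)$, $b=(a_0\utr^g a_0, h)$, so $ab=(a_0,gh)$; and $c=(c_0,p)$, $d=(c_0\utr^p c_0, q)$, so $cd=(c_0,pq)$; the hypothesis $ab=cd$ gives $c_0=a_0$ and $pq=gh$. The candidate is $e=(a_0\utr^g a_0,\, g^{-1}p)$: then one checks $(a,e)\in D$ (need $a_0\utr^g a_0 = a_0\utr^g a_0$, trivially true), $ae=(a_0, g\cdot g^{-1}p)=(a_0,p)=c$, and for $(e,d)\in D$ one needs $d$'s first coordinate to equal $e$'s first coordinate $\utr^{(g^{-1}p)}$ itself, i.e. $c_0\utr^p c_0 \overset{?}{=} (a_0\utr^g a_0)\utr^{g^{-1}p}(a_0\utr^g a_0)$; by $G$-family (iv) the right side is $a_0\utr^{g\cdot g^{-1}p}a_0 = a_0\utr^{p}a_0 = c_0\utr^p c_0$, so it checks out; finally $ed=(a_0\utr^g a_0,\, g^{-1}p\cdot q)=(a_0\utr^g a_0,\, g^{-1}\cdot gh)=(a_0\utr^g a_0,h)=b$. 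The forward direction (uniqueness/necessity of such an $e$) follows by running the same computation backwards, using injectivity from axiom (i). So the hard part is really just bookkeeping in axiom (v): keeping straight which $G$-element sits where and invoking $G$-family axiom (iv) at precisely the right moment to reconcile the $D$-membership conditions; once that pattern is set up, (ii) and (iii) fall to the same technique.
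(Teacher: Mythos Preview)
Your proposal is correct and follows essentially the same route as the paper's proof: first check that $(Q,\utr,\otr)$ is a biquandle coordinatewise, then verify axioms (i)--(v) of Definition~\ref{def:PMB} by unwinding $D$-membership to the condition $a\utr^g a = b$ and repeatedly invoking $G$-family axiom (iv) to rewrite $(a\utr^g a)\utr^h(a\utr^g a)=a\utr^{gh}a$; in particular your treatment of axiom (v), producing $e=(a_0\utr^g a_0,\,g^{-1}p)$, is exactly the paper's construction. A few slips to clean up: for bijectivity of $S$ on $Q$ you need the full $S_{g,h}$ (not $S_{h,h}$); in axiom (ii) both products come out to $(a,hg)$ (you wrote $(b,gh)$ on one side); and your displayed identity in axiom (iii) is not what the axiom says---the correct form is $x\utr(ab)=(x\utr a)\utr b$ and its three companions, which the paper checks directly using $G$-family (iii) and (iv), just as you indicate in prose.
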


\begin{proof} This is a matter of verifying that the axioms of a partially 
multiplicative biquandle are satisfied. We must first show that 
$(Q,\utr ,\otr)$ is a biquandle. We have
%\begin{itemize}
%\item
\[(a,g)\utr (a,g)=(a\utr^ga,g^{-1}gg)
=(a\otr^ga,g)=(a,g)\otr (a,g)\]
so the first biquandle axiom is satisfied.
%\item
Since $\utr^ga,\otr^ga:X\to X$ is bijective,
%\begin{itemize}
%\item[]
$\utr (a,g):X\times G\to X\times G;(x,k)\mapsto(x\utr^ga,g^{-1}kg)$ and
%is bijective,
%\item[]
$\otr (a,g):X\times G\to X\times G;(x,k)\mapsto(x\otr^ga,k)$ are bijective.
%\end{itemize}
Since $S_{g,h}:X\times X\to X\times X;(x,y)\mapsto(y\otr^gx,x\utr^hy)$ is bijective,
%\begin{itemize}
%\item[]
$S:Q\times Q\to Q\times Q;((x,g),(y,h))\mapsto((y\otr^gx,h),(x\utr^hy,h^{-1}gh))$ is bijective,
%\end{itemize}
and the second biquandle axiom is satisfied. Verifying the exchange laws, 
we have
%\item
\begin{align*}
((a,g)\utr (b,h))\utr ((c,k)\otr (b,h))
&=(a\utr^hb,h^{-1}gh)\utr (c\otr^hb,k) \\
&=((a\utr^hb)\utr^k(c\otr^hb),k^{-1}h^{-1}ghk) \\
&=((a\utr^kc)\utr^{k^{-1}hk}(b\utr^kc),k^{-1}h^{-1}ghk) \\
&=(a\utr^kc,k^{-1}gk)\utr (b\utr^kc,k^{-1}hk) \\
&=((a,g)\utr (c,k))\utr ((b,h)\utr (c,k)),
\end{align*}
\begin{align*}
((a,g)\otr (b,h))\utr ((c,k)\otr (b,h))
&=(a\otr^hb,g)\utr (c\otr^hb,k) \\
&=((a\otr^hb)\utr^k(c\otr^hb),k^{-1}gk) \\
&=((a\utr^kc)\otr^{k^{-1}hk}(b\utr^kc),k^{-1}gk) \\
&=(a\utr^kc,k^{-1}gk)\otr (b\utr^kc,k^{-1}hk) \\
&=((a,g)\utr (c,k))\otr ((b,h)\utr (c,k))
\end{align*}
and
\begin{align*}
((a,g)\otr (b,h))\otr ((c,k)\otr (b,h))
&=(a\otr^hb,g)\otr (c\otr^hb,k) \\
&=((a\otr^hb)\otr^k(c\otr^hb),g) \\
&=((a\otr^kc)\otr^{k^{-1}hk}(b\utr^kc),g) \\
&=(a\otr^kc,g)\otr (b\utr^kc,k^{-1}hk) \\
&=((a,g)\otr (c,k))\otr ((b,h)\utr (c,k)).
\end{align*}
Thus, $(Q,\utr ,\otr)$ is a biquandle.

%\item
Next, we note that
\[(a,g)\cdot\ :(a\utr^ga,h)\mapsto(a,g)(a\utr^ga,h)=(a,gh)\] %is injective, \\
and
\[\cdot\ (a\utr^ga,h):(a,g)\mapsto(a,g)(a\utr^ga,n)=(a,gh)\]
are injective.

%\item
Since
$a\utr^ga=b\utr^ga\Leftrightarrow a=b
\Leftrightarrow b\otr^hb=a\otr^hb$,
we have
\begin{align*}
&((a,g),(b,h)\utr (a,g))=((a,g),(b\utr^ga,g^{-1}hg))\in D \\
&\Leftrightarrow((b,h),(a,g)\otr (b,h))=((b,h),(a\otr^hb,g))\in D.
\end{align*}
Then
\begin{align*}
(a,g)((a,h)\utr (a,g)) & =  (a,g)(a\utr^ga,g^{-1}hg)=(a,hg) \\
&=  (a,h)(a\otr^ha,g)=(a,h)((a,g)\otr (a,h)).
\end{align*}

%\item
Since
$((a,g)\utr (x,k),(b,h)\utr ((x,k)\otr (a,g)))
=((a\utr^kx,k^{-1}gk),(b\utr^k(x\otr^ga),k^{-1}hk))$,
we have
\begin{align*}
((a,g)\utr (x,k),(b,h)\utr ((x,k)\otr (a,g)))\in D 
& \Leftrightarrow
(a\utr^kx)\utr^{k^{-1}gk}(a\utr^kx)
=b\utr^k(x\otr^ga) \\
&\Leftrightarrow
(a\utr^ga)\utr^k(x\otr^ga)
=b\utr^k(x\otr^ga) \\
&\Leftrightarrow a\utr^ga=b \\
& \Leftrightarrow((a,g),(b,h))\in D.
\end{align*}
Then
\begin{align*}
(x,k)\utr ((a,g)(a\utr^ga,h))
&=(x,k)\utr (a,gh) \\
&=(x\utr^{gh}a,h^{-1}g^{-1}kgh)
=((x\utr^ga)\utr^h(a\utr^ga),h^{-1}g^{-1}kgh) \\
&=(x\utr^ga,g^{-1}kg)\utr (a\utr^ga,h)
=((x,k)\utr (a,g))\utr (a\utr^ga,h) \\
\end{align*}
and
\begin{align*}
((a,g)(a\utr^ga,h))\utr (x,k)
&=(a,gh)\utr (x,k)
=(a\utr^kx,k^{-1}ghk) \\
&=(a\utr^kx,k^{-1}gk)((a\utr^kx)\utr^{k^{-1}gk}(a\utr^kx),k^{-1}hk) \\
&=(a\utr^kx,k^{-1}gk)((a\utr^ga)\utr^k(x\otr^ga),k^{-1}hk) \\
&=(a\utr^kx,k^{-1}gk)((a\utr^ga,h)\utr (x\otr^ga,k)) \\
&=((a,g)\utr (x,k))((a\utr^ga,h)\utr ((x,k)\otr (a,g))).
\end{align*}

%\item
Since
$((a,g)\otr (x,k),(b,h)\otr ((x,k)\utr (a,g)))
=((a\otr^kx,g),(b\otr^{g^{-1}kg}(x\utr^ga),h))$,
we have
\begin{align*}
((a,g)\otr (x,k),(b,h)\otr ((x,k)\utr (a,g)))\in D 
&\Leftrightarrow
(a\otr^kx)\utr^g(a\otr^kx)
=b\otr^{g^{-1}kg}(x\utr^ga) \\
&\Leftrightarrow
(a\utr^ga)\otr^{g^{-1}kg}(x\utr^ga)
=b\otr^{g^{-1}kg}(x\utr^ga) \\
&\Leftrightarrow a\utr^ga=b
\Leftrightarrow((a,g),(b,h))\in D.
\end{align*}
Then
\begin{align*}
(x,k)\otr ((a,g)(a\utr^ga,h))
&=(x,k)\otr (a,gh) \\
&=(x\otr^{gh}a,k)
=((x\otr^ga)\otr^h(a\utr^ga),k) \\
&=(x\otr^ga,k)\otr (a\utr^ga,h)
=((x,k)\otr (a,g))\otr (a\utr^ga,h), \\
\end{align*}
and
\begin{align*}
((a,g)(a\utr^ga,h))\otr (x,k)
&=(a,gh)\otr (x,k)
=(a\otr^kx,gh) \\
&=(a\otr^kx,g)((a\otr^kx)\utr^g(a\otr^kx),h) \\
&=(a\otr^kx,g)((a\utr^ga)\otr^{g^{-1}kg}(x\utr^ga),h) \\
&=(a\otr^kx,g)((a\utr^ga,h)\otr (x\utr^ga,g^{-1}kg)) \\
&=((a,g)\otr (x,k))((a\utr^ga,h)\otr ((x,k)\utr (a,g))).
\end{align*}

%\item
Since
$a\utr^{gh}a=(a\utr^ga)\utr^h(a\utr^ga)$,
we have
\begin{align*}
((a,g),(b,h)),((a,g)(b,h),(c,i))\in D 
&\Leftrightarrow a\utr^ga=b,a\utr^{gh}a=c \\
&\Leftrightarrow b\utr^hb=c,a\utr^ga=b \\
&\Leftrightarrow((b,h),(c,i)),((a,g),(b,h)(c,i))\in D.
\end{align*}
Then
\begin{align*}
&((a,g)(b,h))(c,i)=(a,gh)(c,i)=(a,ghi)=(a,g)(b,hi)=(a,g)((b,h)(c,i)).
\end{align*}

%\item
Finally, since
$(a\utr^ga)\utr^{g^{-1}i}(a\utr^ga)=a\utr^ia$,
%we have
%\begin{itemize}
%\item[]
there is an $(e,k)\in X\times G$ such that
%\begin{center}
\[((a,g),(e,k)),((e,k),(d,j))\in D, \quad (a,g)(e,k)=(c,i),
\quad\mathrm{and}\quad  (e,k)(d,j)=(b,h)\]
%\end{center}
\begin{itemize}
\item[$\Leftrightarrow$]
$\exists e\in X$, $\exists k\in G$ such that $a\utr^ga=e$, $e\utr^ke=d$, $a=c$, $gk=i$, $e=b$, $kj=h$
\item[$\Leftrightarrow$]
$(a\utr^ga)\utr^{g^{-1}i}(a\utr^ga)=d$, $a=c$, $g^{-1}i=hj^{-1}$, $a\utr^ga=b$
\item[$\Leftrightarrow$]
$a\utr^ga=b$, $c\utr^ic=d$, $a=c$, $gh=ij$
\item[$\Leftrightarrow$]
$((a,g),(b,h)),((c,i),(d,j))\in D$, $(a,g)(b,h)=(c,i)(d,j)$
\end{itemize}
and $(Q,\utr ,\otr,\cdot)$ is a partially multiplicative biquandle.
%\end{itemize}
\end{proof}

\begin{definition}
For any $G$-family of biquandles $X$, the partially multiplicative biquandle 
$(Q,\utr,\otr,\cdot)$ is the \textit{partially multiplicative biquandle
associated to $X$}.
\end{definition}

\begin{example}
Let $(X,(\utr^g)_{g\in\mathbb{Z}_4},(\otr^g)_{g\in\mathbb{Z}_4})$ be the 
$\mathbb{Z}_4$-family of biquandles from Example \ref{ex:gfam}, i.e.
$X=\mathbb{Z}_5$ with $t=2$ and $s=3$. Then we have
$Q=\mathbb{Z}_5\times\mathbb{Z}_4$ with operations
\[\begin{array}{rcl}
(a,g)\utr (b,h) & = & (2^ha+(3^h-2^h)b,g) \\
(a,g)\otr (b,h) & = & (3^ha,g) \\
(a,g)\cdot(a\utr^g a,h) & = & (a,g+h).
\end{array}\]
Then for instance, we have 
\[(2,3)\utr (3,3)=(2^3(2)+(3^3-2^3)3,3)=(3,3).\]
\end{example}

\section{Counting Invariants}\label{Inv}

Given a $Y$-oriented spatial trivalent graph diagram $\Gamma$ representing an 
$S^1$-oriented handlebody-knot and a partially multiplicative
biquandle $X$, the number of $X$-colorings of semiarcs in a diagram of 
$\Gamma$ is unchanged by Reidemeister moves by construction. Thus we have

\begin{definition}
Let $\Gamma$ be a $Y$-oriented spatial graph diagram representing an 
$S^1$-oriented handlebody-knot and $X$ a partially multiplicative biquandle. 
Then the \textit{partially multiplicative biquandle counting invariant} of
$\Gamma$ with respect to $X$ is the number of $X$-colorings of $\Gamma$, 
denoted $\Phi^{\mathbb{Z}}_X(\Gamma).$
\end{definition}

\begin{example} Let $X$ be the partially multiplicative biquandle associated to 
the Alexander biquandle $Z_5$ with $t=2$ and $s=3$ as in Example \ref{ex:gfam}.
Let us compute the number of $X$-colorings of the unknotted Theta graph 
$\Theta$ below:
\[\includegraphics{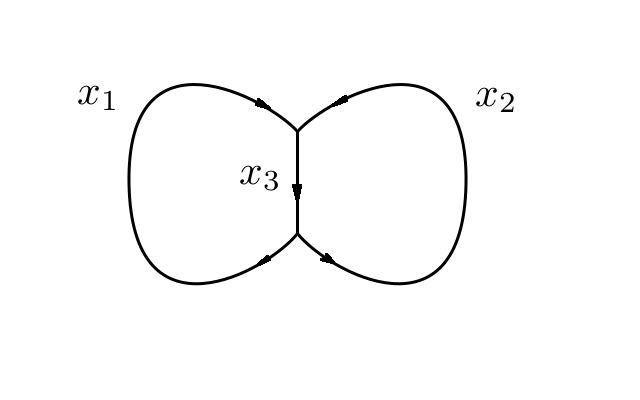}\]
For any $x_1=(a,g)\in\mathbb{Z}_5\times\mathbb{Z}_4$, we must have 
$x_2=(a\utr^g a,h)$ and $x_3=(a,gh)$. Then for any choice of 
$g,h\in\mathbb{Z}_4$ and $a\in\mathbb{Z}_5$, we get a valid $X$-coloring;
hence the counting invariant value is $\Phi_X^{\mathbb{Z}}(\Theta)=5(4)^2=80.$
\end{example}

When $X$ is a $G$-family of biquandles, we can take advantage of this extra
structure to enhance the counting invariant. Specifically, collecting 
together biquandle colorings which differ only in the the first component
gives us a way of filtering the set of $X$-colorings of our handlebody-knot 
diagram which is unchanged by Reidemeister moves, since forgetting the first
component yields a group coloring by $G$ (i.e., a group homomorphism from
$\pi_1(S^3\setminus \Gamma)$ to $G$). 
%The set of such group colorings itself
%defines an invariant of spatial graphs, 
%$\Phi_G^{\mathbb{Z}}(\Gamma)=\mathrm{Hom}(\pi_1(S^3\setminus \Gamma),G)$.
Given a $G$-coloring $\psi\in\mathrm{Hom}(\pi_1(S^3\setminus \Gamma),G)$, let 
us denote  by $\pi^{-1}(\psi)$ the set of $X$-colorings which project to 
$\psi$ by forgetting the first component $(a,g)\mapsto g$ on each semiarc. 
Then we have: 

\begin{definition}
Let $X$ be a $G$-family of biquandles and $\Gamma$ a $Y$-oriented spatial 
trivalent graph diagram representing an $S^1$-oriented
handlebody-knot. The \textit{$G$-enhanced biquandle counting invariant} of 
$\Gamma$ is the polynomial
\[\Phi_X^G(\Gamma)=\sum_{\psi\in\mathrm{Hom}(\pi_1(S^3\setminus \Gamma),G)} 
u^{|\pi^{-1}(\psi)|}\]
\end{definition}

\begin{proposition}
If two $Y$-oriented spatial trivalent graph diagrams $\Gamma$ and $\Gamma'$ 
representing $S^1$-oriented handlebody-knots are related by Reidemeister 
moves, then for any $G$-family of biquandles $X$, we have
\[\Phi_X^G(\Gamma)=\Phi_X^G(\Gamma').\]
\end{proposition}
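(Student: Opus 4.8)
The plan is to reduce this to the invariance already established for the partially multiplicative biquandle counting invariant. First I would invoke the preceding Proposition (the one asserting that an $X$-coloring of $\Gamma$ extends uniquely across any handlebody-knot Reidemeister move), which gives, for each such move relating $\Gamma$ to $\Gamma'$, a bijection $\Phi:\mathcal{C}_X(\Gamma)\to\mathcal{C}_X(\Gamma')$ that is the identity outside the neighborhood of the move. The point is that this bijection is \emph{compatible with the projection} $\pi$ to $G$-colorings: forgetting the first component $(a,g)\mapsto g$ on each semiarc turns an $X$-coloring into a $G$-coloring (equivalently, a homomorphism $\pi_1(S^3\setminus\Gamma)\to G$, using axiom (iv) of the $G$-family to see the relation $g^{\,b}\mapsto h^{-1}gh$ read off at a crossing and the multiplicativity $g\mapsto gh$ at a vertex are exactly the Wirtinger-type relations for $\pi_1$), and the Reidemeister moves likewise induce the standard bijection $\pi_1(S^3\setminus\Gamma)\cong\pi_1(S^3\setminus\Gamma')$, hence a bijection $\pi_0:\mathrm{Hom}(\pi_1(S^3\setminus\Gamma),G)\to\mathrm{Hom}(\pi_1(S^3\setminus\Gamma'),G)$.

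Next I would check the square commutes: $\pi'\circ\Phi=\pi_0\circ\pi$. This is immediate for moves I, II, III (the projected $G$-coloring is the usual quandle/biquandle-type coloring, whose invariance is classical), and for the new moves IV, V, VI it follows by inspecting the coloring rules in the figures of Definition \ref{def:PMB} together with the $G$-family axioms — the first-component data never interferes with how the second component propagates, because in every defining formula ($(a,g)\utr(b,h)=(a\utr^h b,h^{-1}gh)$, $(a,g)\otr(b,h)=(a\otr^h b,g)$, $(a,g)(a\utr^g a,h)=(a,gh)$) the $G$-component is computed from $G$-components alone. Granting commutativity, $\Phi$ restricts, for each $\psi\in\mathrm{Hom}(\pi_1(S^3\setminus\Gamma),G)$, to a bijection $\pi^{-1}(\psi)\to\pi'^{-1}(\pi_0(\psi))$, so $|\pi^{-1}(\psi)|=|\pi'^{-1}(\pi_0(\psi))|$.

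Finally I would assemble the polynomial: since $\pi_0$ is a bijection on the indexing set of $G$-colorings and $|\pi^{-1}(\psi)|=|\pi'^{-1}(\pi_0(\psi))|$ for every $\psi$, the sums
\[
\Phi_X^G(\Gamma)=\sum_{\psi}u^{|\pi^{-1}(\psi)|}
=\sum_{\psi}u^{|\pi'^{-1}(\pi_0(\psi))|}
=\sum_{\psi'}u^{|\pi'^{-1}(\psi')|}=\Phi_X^G(\Gamma')
\]
agree term by term. Since any two diagrams of the same $S^1$-oriented handlebody-knot are connected by a finite sequence of handlebody-knot Reidemeister moves, iterating gives the full statement.

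\textbf{Main obstacle.} The only non-formal point is the commutativity $\pi'\circ\Phi=\pi_0\circ\pi$ at the moves involving a vertex (IV, V, and especially the IH-move VI), i.e. verifying that the canonical identification of fundamental groups across these moves really does intertwine the forgetful projections — this is where one must be careful that the partial-multiplication rule at a vertex descends to the group multiplication in $G$ and that the constraint set $D$ (built from the $G$-family via $a\utr^g a$) is respected on both sides of the move. Everything else is bookkeeping already licensed by the earlier propositions.
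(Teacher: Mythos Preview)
Your proposal is correct and matches the paper's approach: in fact the paper states this proposition without a formal proof, relying on the preceding paragraph's observation that forgetting the first component $(a,g)\mapsto g$ yields a $G$-coloring (a homomorphism $\pi_1(S^3\setminus\Gamma)\to G$) and that this filtration of the set of $X$-colorings is unchanged by Reidemeister moves. Your argument is precisely the rigorous version of that sketch --- the bijection on $X$-colorings commutes with projection to $G$ because in every defining formula the $G$-component depends only on $G$-components --- so there is nothing to correct.
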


\begin{example}
Let us illustrate the computation of the $G$-family enhanced counting invariant for the Kinoshita Theta graph $\Gamma$ below 
\[\includegraphics{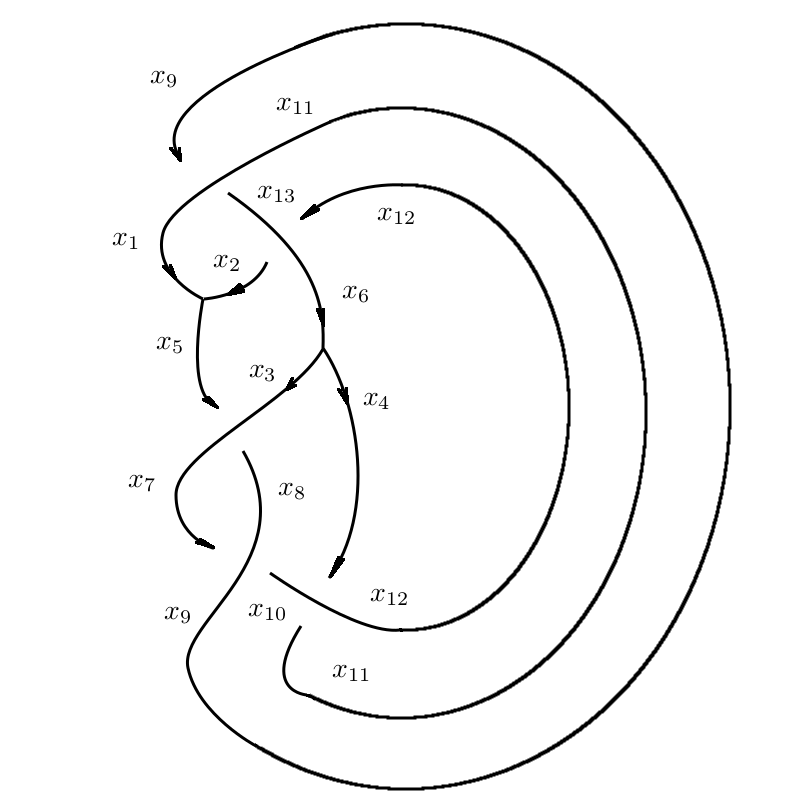}\]
with respect to the $G$-family of biquandles associated to
the Alexander biquandle $\mathbb{Z}_3$ with $t=1$ and $s=2$. We have 
\[\begin{array}{rclrcl}
x\utr^{[1]} y & = & 1x+(2-1)y = x+y, & x\otr^{[1]} y & = & 2x \\
x\utr^{[2]} y & = & 1^2x +(2^2-1^2)y=x & x\otr^{[2]} y & = & 2^2x=x \\
\end{array}\]
so $X$ has type 2 and we have a $\mathbb{Z}_2$-family of biquandles.
Then we have partially multiplicative biquandle operations
\[\begin{array}{rcl}
(a,g)\utr (b,h) & = & (a+(2^h-1)b,g) \\
(a,g)\otr (b,h) & = & (2^ha,g) \\
(a,g)\cdot(a\utr^g a,h) & = & (a,g+h).
\end{array}\]
Since the group colorings don't change at crossings with this abelian group
$G=\mathbb{Z}_2$, a choice of $g,h\in \mathbb{Z}_2$ determines the
group coloring. 
\[\includegraphics{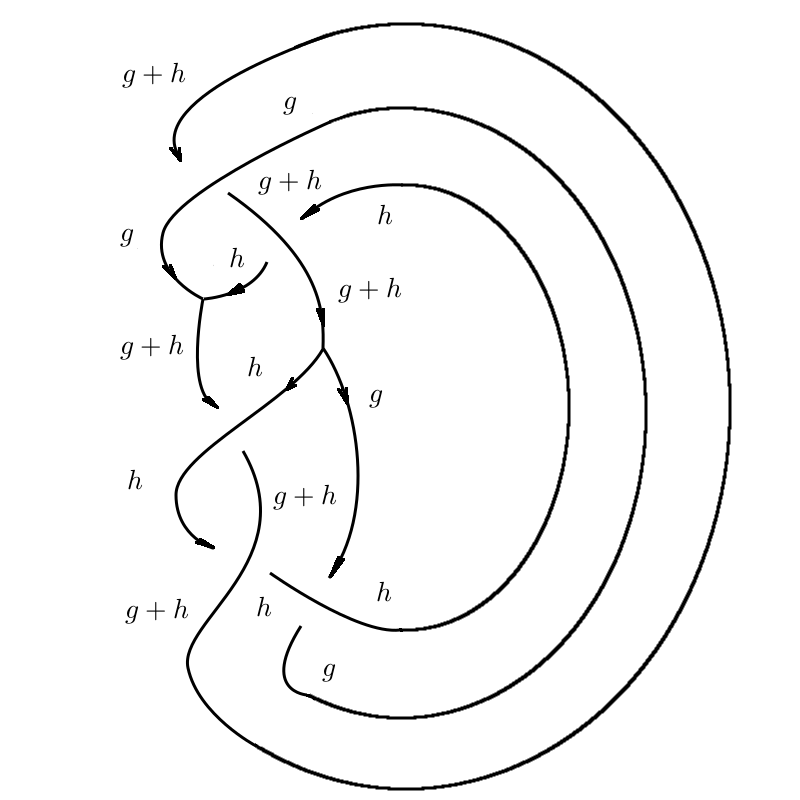}\]
For each assignment of $g,h\in\mathbb{Z}_2$ to 
the second components of $x_1=(a_1,g)$ and $x_2=(a_2,h)$
respectively, we get a system of linear equations over $\mathbb{Z}_3$. 
For instance, taking $g=0$ and $h=1$, we have
\[\begin{array}{rcl}
a_1 & = & a_5 \\
a_2 & = & a_1\utr^g a_1 \\
a_3 & = & a_6 \\
a_3 & = & a_7\otr^{g+h} a_5\\
a_4 & = & a_3\utr^h a_3\\
a_4 & = & a_{11}\utr^h a_{10}\\
a_6 & = & a_{13}\otr^{h} a_2 \\
a_8 & = & a_5\utr^{h} a_7\\
a_8 & = & a_9\otr^{h} a_7\\
a_{10} & = & a_7\utr^{g+h}a_9\\
a_{11} & = & a_1\otr^{g+h} a_9\\
a_{12} & = & a_2\utr^{g+h} a_{13}\\
a_{12} & = & a_{10}\otr^{g} a_{11}\\
a_{13} & = & a_9\utr^{g} a_1
\end{array}
\leftrightarrow
\left[\begin{array}{rrrrrrrrrrrrr}
2 & 0 & 0 & 0 & 1  & 0 & 0 & 0 & 0 & 0  & 0 & 0 & 0 \\
1 & 2 & 0 & 0 & 0  & 0 & 0 & 0 & 0 & 0  & 0 & 0 & 0 \\
0 & 0 & 2 & 0 & 0  & 1 & 0 & 0 & 0 & 0  & 0 & 0 & 0 \\
0 & 0 & 2 & 0 & 0  & 0 & 2 & 0 & 0 & 0  & 0 & 0 & 0 \\
0 & 0 & 2 & 2 & 0  & 0 & 0 & 0 & 0 & 0  & 0 & 0 & 0 \\
0 & 0 & 0 & 2 & 0  & 0 & 0 & 0 & 0 & 1  & 1 & 0 & 0 \\
0 & 0 & 0 & 0 & 0  & 2 & 0 & 0 & 0 & 0  & 0 & 0 & 2 \\
0 & 0 & 0 & 0 & 1  & 0 & 1 & 2 & 0 & 0  & 0 & 0 & 0 \\
0 & 0 & 0 & 0 & 0  & 0 & 0 & 2 & 2 & 0  & 0 & 0 & 0 \\
0 & 0 & 0 & 0 & 0  & 0 & 1 & 0 & 1 & 2  & 0 & 0 & 0 \\
2 & 0 & 0 & 0 & 0  & 0 & 0 & 0 & 0 & 0  & 2 & 0 & 0 \\
0 & 1 & 0 & 0 & 0  & 0 & 0 & 0 & 0 & 0  & 0 & 2 & 1 \\
0 & 0 & 0 & 0 & 0  & 0 & 0 & 0 & 0 & 1  & 0 & 2 & 0 \\
0 & 0 & 0 & 0 & 0  & 0 & 0 & 0 & 1 & 0  & 0 & 0 & 2 \\
\end{array}\right]
\]
which row-reduces over $\mathbb{Z}_3$ to
\[\left[\begin{array}{rrrrrrrrrrrrr}
1 & 2 & 0 & 0 & 0  & 0 & 0 & 0 & 0 & 0  & 0 & 0 & 0 \\
0 & 1 & 0 & 0 & 0  & 0 & 0 & 0 & 0 & 0  & 0 & 2 & 1 \\
0 & 0 & 1 & 0 & 0  & 2 & 0 & 0 & 0 & 0  & 0 & 0 & 0 \\
0 & 0 & 0 & 1 & 0  & 1 & 0 & 0 & 0 & 0  & 0 & 0 & 0 \\
0 & 0 & 0 & 0 & 1  & 0 & 0 & 0 & 0 & 0  & 0 & 2 & 1 \\
0 & 0 & 0 & 0 & 0  & 1 & 0 & 0 & 0 & 1  & 1 & 0 & 0 \\
0 & 0 & 0 & 0 & 0  & 0 & 1 & 0 & 1 & 2  & 0 & 0 & 0 \\
0 & 0 & 0 & 0 & 0  & 0 & 0 & 1 & 1 & 2  & 0 & 2 & 1 \\
0 & 0 & 0 & 0 & 0  & 0 & 0 & 0 & 1 & 0  & 0 & 0 & 0 \\
0 & 0 & 0 & 0 & 0  & 0 & 0 & 0 & 0 & 1  & 1 & 0 & 2 \\
0 & 0 & 0 & 0 & 0  & 0 & 0 & 0 & 0 & 0  & 1 & 1 & 2 \\
0 & 0 & 0 & 0 & 0  & 0 & 0 & 0 & 0 & 0  & 0 & 0 & 1 \\
0 & 0 & 0 & 0 & 0  & 0 & 0 & 0 & 0 & 0  & 0 & 0 & 0 \\
0 & 0 & 0 & 0 & 0  & 0 & 0 & 0 & 0 & 0  & 0 & 0 & 0 \\
\end{array}\right]\]
so we have $|\{0,1,2\}|=3$ $X$-colorings where $x_1=(a_1,0)$
and $x_2=(a_2,1)$. Repeating for $(g,h)=(1,1), (1,0)$ and
$(0,0)$, we obtain $3$ $X$-colorings for each, so we have
\[\Phi_X^G(\Gamma)=4u^3.\]
%Since this is different from $\Phi_X^G(\Theta)=4u^3$ for the
%unknotted Theta curve, the invariant detects the knottedness
%of $\Gamma.$
\end{example}

\section{Questions}\label{Q}

We end with some questions for future work.

What other enhancements of the $X$ counting invariant can be defined when
$X$ is a partially multiplicative biquandle or a group decomposable biquandle?
What about cocycle invariants in this setting?

\bibliography{ai-sn-rev}{}
\bibliographystyle{abbrv}

\noindent\textsc{Institute of Mathematics \\ 
University of Tsukuba \\
1-1-1 Tennodai \\
Tsukuba, Ibaraki 305-8571, Japan}

\

\noindent\textsc{Department of Mathematical Sciences\\
Claremont McKenna College \\
850 Columbia Ave. \\
Claremont, CA 91767, USA}

\end{document}